\def\RR{{\mathbb R}}
\def\MM{{\mathbb M}}
\def\NN{{\mathbb N}}
\def\OO{{\mathbb O}}
\def\Z{\mathcal{Z}}
\def\R{\mathcal{R}}
\def\Q{\mathcal{Q}}
\def\det{{\rm det}}
\def\Aff{{\rm Aff}}
\def\eps{\varepsilon}
\def\then{\Rightarrow}
\def\inff{\mathop{\inf\limits_{a\in\RR^N}}}
\def\infff{\mathop{\inff\limits_{b\in\RR^m}}}
\title[Relaxation and 3d-2d passage with determinant type constraints]{Relaxation and 3d-2d passage with determinant type constraints: an outline}
\author{Omar Anza Hafsa}
\address{UNIVERSITE MONTPELLIER II, UMR-CNRS 5508, LMGC,  Place Eug\`ene Bataillon, 34095 Montpellier, France.}
\email{Omar.Anza-Hafsa@univ-montp2.fr}
\author{Jean-Philippe Mandallena}
\address{UNIVERSITE DE NIMES, Site des Carmes, Place Gabriel P\'eri, 30021 N\^\i mes, France.}
\email{jean-philippe.mandallena@unimes.fr}
\begin{document}

\begin{abstract}
We outline our work (see \cite{oah-jpm06,oah-jpm07,oah-jpm08b,oah-jpm08a}) on relaxation and 3d-2d passage with determinant type constraints. Some open questions are addressed. This outline-paper comes as a companion to \cite{oah-jpm09}.
\end{abstract}

\maketitle

\tableofcontents

\section{Relaxation with determinant type constraints}

\subsection{Statement of the problem}\ 

Let $m,N\in\NN$ (with $\min\{m,N\}>1$), let $p>1$ and let $W:\MM^{m\times N}\to[0,+\infty]$ be Borel measurable and $p$-coercive, i.e., 
$$
\exists C>0\ \forall F\in\MM^{m\times N}\ W(F)\geq C|F|^p,
$$
 where $\MM^{m\times N}$ denotes the space of real $m\times N$ matrices. Define the functional $I:W^{1,p}(\Omega;\RR^m)\to[0,+\infty]$ by
$$
\displaystyle I(\phi):=\int_\Omega W(\nabla\phi(x))dx,
$$
where $\Omega\subset\RR^N$ is a bounded open set, and consider  $\overline{I}:W^{1,p}(\Omega;\RR^m)\to[0,+\infty]$ (the relaxed functional of $I$) given by 
$$
\displaystyle\overline{I}(\phi):=\inf\left\{\liminf_{n\to+\infty}I(\phi_n):\phi_n\stackrel{L^p}{\to}\phi\right\}.
$$
Denote the quasiconvex envelope of $W$ by $\Q W:\MM^{m\times N}\to[0,+\infty]$. The problem of the relaxation is the following:
\begin{itemize}
\item[($\mathcal{P}_1$)] {\em prove (or disprove) that 
$$
\forall\phi\in W^{1,p}(\Omega;\RR^m)\ \ \overline{I}(\phi)=\int_\Omega \Q W(\nabla\phi(x))dx
$$
and find a representation formula for $\Q W$.}
\end{itemize}
At the begining of the eighties, Dacorogna answered to ($\mathcal{P}_1$) in the case where $W$ is ``finite and without singularities" (see \S 1.2). Recently, we extended the Dacorogna theorem as Theorem A and Theorem B (see \S 1.3 and \S 1.4) and we showed that these theorems can be used to deal with ($\mathcal{P}_1$) under the ``weak-Determinant Constraint", i.e., when $m=N$ and $W:\MM^{N\times N}\to[0,+\infty]$ is compatible with the following two conditions:
$$
\hbox{(w-DC)}\left\{\begin{array}{l}W(F)=+\infty\iff -\delta\leq \det F\leq 0\hbox{ with $\delta\geq 0$ (possibly very large)}\\
W(F)\to+\infty\hbox{ as }\det F\to 0^+\end{array}\right.
$$
(see \S 1.6). However, the results of this section do not allow to treat ($\mathcal{P}_1$) under the ``strong-Determinant Constraint", i.e., when $m=N$ and $W:\MM^{N\times N}\to[0,+\infty]$ is compatible with the two basic conditions of nonlinear elasticity:
$$
\hbox{(s-DC)}\left\{\begin{array}{ll}W(F)=+\infty\iff \det F\leq 0&\hbox{ (non-interpenetration of matter)}\\
W(F)\to+\infty\hbox{ as }\det F\to 0^+&\left(\begin{array}{l}\hbox{necessity of an infinite amount}\\
\hbox{of energy to compress a finite}\\
\hbox{volume into zero volume}\end{array}\right)\end{array}\right.
$$
(see \S 1.7).

\subsection{Representation of $\Q W$  and $\overline{I}$: finite case}\ 

Let $\Z_\infty W, \Z W:\MM^{m\times N}\to[0,+\infty]$ be respectively defined by:
\begin{itemize}
\item[\SMALL$\blacklozenge$] $\displaystyle \Z_\infty W(F):=\inf\left\{\int_Y W(F+\nabla\varphi(y))dy:\varphi\in W^{1,\infty}_0(Y;\RR^m)\right\}$;
\item[\SMALL$\blacklozenge$] $\displaystyle\Z W(F):=\inf\left\{\int_Y W(F+\nabla\varphi(y))dy:\varphi\in\Aff_0(Y;\RR^m)\right\}$,
\end{itemize}
where $Y:=]0,1[^N$, $W^{1,\infty}_0(Y;\RR^m):=\{\varphi\in W^{1,\infty}(Y;\RR^m):\varphi=0\hbox{ on }\partial Y\}$ and $\Aff_0(Y;\RR^m):=\{\varphi\in \Aff(Y;\RR^m):\varphi=0\hbox{ on }\partial Y\}$ with $\Aff(Y;\RR^m)$ denoting the space of continuous piecewise affine functions from $Y$ to $\RR^m$.
\newtheorem*{Remark}{{\rm\em Remark}\rm}
\begin{Remark}
\rm One always has $W\geq\Z W\geq \Z_\infty W\geq\Q W$.
\end{Remark}
\newtheorem*{Theorem}{\bf Theorem}
\begin{Theorem}[Dacorogna \cite{dacorogna82} 1982]\ \hskip150mm
\begin{itemize}
\item[(a)] Representation of $\Q W${\rm:} if $W$ is continuous and finite then 
$$\Q W=\Z W=\Z_\infty W.$$
\item[(b)] Integral representation of $\overline{I}${\rm:} if $W$ is continuous and
$$
\exists c>0\ \ \forall F\in\MM^{m\times N}\ \ W(F)\leq c(1+|F|^p)
$$
then 
$$
\forall\phi\in W^{1,p}(\Omega;\RR^m)\quad \displaystyle\overline{I}(\phi)=\int_\Omega \Q W(\nabla\phi(x))dx.
$$
\end{itemize}
\end{Theorem}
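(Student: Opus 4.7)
The Remark supplies the chain $W \geq \mathcal{Z}W \geq \mathcal{Z}_\infty W \geq \mathcal{Q}W$, so the content of (a) is the reverse inequality $\mathcal{Z}W \leq \mathcal{Q}W$, which I would obtain in two sub-steps. The key sub-step is showing that $\mathcal{Z}_\infty W$ is itself quasiconvex: combined with the trivial bound $\mathcal{Z}_\infty W \leq W$ (test with $\varphi = 0$) and the fact that $\mathcal{Q}W$ is the largest quasiconvex minorant of $W$, this would yield $\mathcal{Z}_\infty W \leq \mathcal{Q}W$, hence $\mathcal{Z}_\infty W = \mathcal{Q}W$. To prove quasiconvexity of $\mathcal{Z}_\infty W$, fix $F \in \MM^{m \times N}$ and $\psi \in W^{1,\infty}_0(Y;\RR^m)$; via a Vitali cover I would partition $Y$ (up to a null set) into disjoint cubes $Q_i = y_i + \varepsilon_i Y$ on each of which $\psi$ is close to its tangent affine map at $y_i$; pick $\delta$-near minimizers $\varphi_i \in W^{1,\infty}_0(Y;\RR^m)$ for $\mathcal{Z}_\infty W(F + \nabla\psi(y_i))$; and glue them by setting
$$\Phi(y) := \psi(y) + \varepsilon_i\,\varphi_i\bigl((y - y_i)/\varepsilon_i\bigr) \quad \hbox{for } y \in Q_i.$$
The zero boundary condition on each $\varphi_i$ ensures $\Phi \in W^{1,\infty}_0(Y;\RR^m)$, and uniform continuity of $W$ on bounded matrix sets (guaranteed by continuity and finiteness of $W$) controls the error from replacing $\nabla\psi(y)$ by $\nabla\psi(y_i)$ on each $Q_i$, yielding
$$\mathcal{Z}_\infty W(F) \leq \int_Y W(F + \nabla\Phi)\,dy \leq \int_Y \mathcal{Z}_\infty W(F + \nabla\psi(y))\,dy + \omega(\delta),$$
with $\omega(\delta) \to 0$. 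The second sub-step $\mathcal{Z}W \leq \mathcal{Z}_\infty W$ then follows by approximating any $\varphi \in W^{1,\infty}_0(Y;\RR^m)$ by $\varphi_n \in \Aff_0(Y;\RR^m)$ with uniformly bounded gradients converging a.e.\ to $\nabla\varphi$ and applying dominated convergence to $\int_Y W(F + \nabla\varphi_n)$.

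\textbf{Plan for (b).} The lower bound $\overline{I}(\phi) \geq \int_\Omega \mathcal{Q}W(\nabla\phi)\,dx$ follows from the classical Acerbi--Fusco sequential lower semicontinuity theorem: along any sequence $\phi_n \to \phi$ in $L^p$ with $\liminf I(\phi_n)<\infty$, the $p$-coercivity of $W$ yields weak $W^{1,p}$-compactness of $(\phi_n)$ and hence $\phi_n \wto \phi$ up to subsequence, while part (a) makes $\mathcal{Q}W$ quasiconvex with $p$-growth, so $\int_\Omega \mathcal{Q}W(\nabla\phi) \leq \liminf_n \int_\Omega \mathcal{Q}W(\nabla\phi_n) \leq \liminf_n I(\phi_n)$. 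For the upper bound I would build a recovery sequence in three stages. Stage~1 (affine): for $\phi(x) = Fx + b$, choose $\varphi_\delta \in W^{1,\infty}_0(Y;\RR^m)$ with $\int_Y W(F + \nabla\varphi_\delta) \leq \mathcal{Q}W(F) + \delta$ (available by (a)), extend $Y$-periodically, and set $\phi_{n,\delta}(x) := \phi(x) + \frac{1}{n}\varphi_\delta(nx)$; then $\phi_{n,\delta} \to \phi$ uniformly and periodic averaging gives $I(\phi_{n,\delta}) \to |\Omega|\int_Y W(F + \nabla\varphi_\delta) \leq |\Omega|(\mathcal{Q}W(F) + \delta)$, and diagonalization in $\delta$ produces a recovery sequence. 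Stage~2 (piecewise affine): apply Stage~1 on each affine cell of $\phi \in \Aff(\Omega;\RR^m)$; the zero boundary of $\varphi_\delta$ preserves continuity across microscopic $Y$-cells within each piece, and a thin boundary-layer interpolation bridges adjacent pieces with energy cost $o(1)$. Stage~3 (general): by density of $\Aff(\Omega;\RR^m)$ in $W^{1,p}(\Omega;\RR^m)$ approximate $\phi$ by piecewise affine $\phi_k \to \phi$ in $W^{1,p}$, then diagonalize, using the upper $p$-growth of $\mathcal{Q}W$ to pass to the limit in $\int_\Omega \mathcal{Q}W(\nabla\phi_k)$.

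\textbf{Main obstacle.} The decisive step is the quasiconvexity of $\mathcal{Z}_\infty W$ in (a): the gluing construction producing a single test function on $Y$ for $F$ out of many microscopic test functions chosen for the nearby slopes $F + \nabla\psi(y_i)$, with an energy defect going to zero as the grid refines, hinges on uniform continuity of $W$ on bounded matrix sets. This is precisely the property that breaks down once $W$ is allowed to take the value $+\infty$, as under (w-DC) and (s-DC) below, and it motivates the refined Theorems~A and~B announced in the subsequent subsections.
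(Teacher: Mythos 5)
The paper does not prove this statement: it is imported verbatim from Dacorogna's 1982 article \cite{dacorogna82} and used as a black box in the proofs of Theorems A and B, so there is no in-paper argument to compare yours against. Your outline follows the standard classical route (quasiconvexity of $\Z_\infty W$ for part (a); quasiconvex lower semicontinuity plus a three-stage recovery construction for part (b)), and its overall architecture is sound.

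One step, however, is genuinely incomplete as written: the Vitali gluing that is supposed to prove quasiconvexity of $\Z_\infty W$. The near-minimizers $\varphi_i$ are chosen independently for the countably many slopes $F+\nabla\psi(y_i)$, and nothing gives a uniform bound on $\sup_i\|\nabla\varphi_i\|_{L^\infty}$. Without such a bound, (i) the glued map $\Phi$ need not belong to $W^{1,\infty}_0(Y;\RR^m)$, so it is not an admissible competitor in the definition of $\Z_\infty W(F)$, and (ii) the error estimate via ``uniform continuity of $W$ on bounded matrix sets'' has no bounded set to work on, since the arguments $F+\nabla\psi(y)+\nabla\varphi_i(z)$ range over an a priori unbounded region. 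The standard repair is to first run the gluing only against piecewise affine $\psi$ (finitely many slopes, hence finitely many $\varphi_i$ up to rescaling and translation, so the bound is automatic); this yields $\Z_\infty W\leq\Z\Z_\infty W$, which is exactly property (3) of Fonseca quoted in the paper, and one then upgrades to arbitrary Lipschitz test functions using the continuity of $\Z_\infty W$ (property (2) of Fonseca, itself a consequence of rank-one convexity together with the local bound $\Z_\infty W\leq W$). Alternatively one can pick the near-minimizers over a finite net of slopes, but that again requires the continuity of $\Z_\infty W$, which your sketch takes for granted. The remainder of the plan --- the passage from $\Z_\infty W$ to $\Z W$ by piecewise affine approximation and dominated convergence, and both halves of (b) --- is correct, and your closing remark rightly identifies the point at which the whole scheme collapses once $W$ is allowed to take the value $+\infty$, which is precisely what motivates Theorems A and B.
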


\subsection{Representation of $\Q W$: non-finite case}\ 

The part (a) of the Dacorogna theorem can be extended as follows.

\newtheorem*{ThA}{\bf Theorem A}
\begin{ThA}[see \cite{oah-jpm07,oah-jpm08b,oah-jpm09}]\ \hskip150mm
\begin{itemize}
\item[\small$\blacktriangleright$] If $\Z_\infty W$ is finite then $\Q W=\Z_\infty W$.
\item[\small$\blacktriangleright$] If $\Z W$ is finite then $\Q W=\Z W=\Z_\infty W$.

\end{itemize}
\end{ThA}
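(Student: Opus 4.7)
The chain $\Q W\leq \Z_\infty W\leq \Z W\leq W$ from the Remark yields one direction for free, so the task reduces to producing the reverse inequalities. The whole argument rests on one key lemma: \emph{when $\Z_\infty W$ is finite everywhere, $\Z_\infty W$ is itself quasiconvex.} Granting this, since $\Z_\infty W\leq W$ and $\Q W$ is the largest quasiconvex minorant of $W$, one gets $\Z_\infty W\leq\Q W$ and hence the first bullet. The second bullet then follows once one proves $\Z W\leq\Z_\infty W$ under the assumption that $\Z W$ is finite (which, in particular, gives finiteness of $\Z_\infty W$, so that the first bullet applies).

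For the quasiconvexity of $\Z_\infty W$, I would fix $F\in\MM^{m\times N}$, a piecewise affine $\psi\in\Aff_0(Y;\RR^m)$ and $\varepsilon>0$; testing quasiconvexity on such $\psi$ is enough. The plan is a pasting-through-a-Vitali-covering scheme in the spirit of Dacorogna, adapted to the possibly non-finite setting. On each open cell $D_i$ where $\nabla\psi\equiv F_i$ is constant, pick a near-minimizer $\varphi_i\in W^{1,\infty}_0(Y;\RR^m)$ for $\Z_\infty W(F+F_i)$, available because $\Z_\infty W$ is finite. Apply a Vitali covering of each $D_i$ by disjoint scaled copies $y_{i,j}+r_{i,j}Y$ and define
\[
\varphi(x):=\psi(x)+\sum_{i,j}r_{i,j}\,\varphi_i\!\left(\tfrac{x-y_{i,j}}{r_{i,j}}\right)\chi_{y_{i,j}+r_{i,j}Y}(x).
\]
Since every $\varphi_i$ vanishes on $\partial Y$, the perturbations glue into a Lipschitz map and $\varphi\in W^{1,\infty}_0(Y;\RR^m)$. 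A change of variables on each small cube and the Vitali measure estimate give
\[
\int_Y W(F+\nabla\varphi)dy\approx\sum_i|D_i|\,\Z_\infty W(F+F_i)=\int_Y\Z_\infty W(F+\nabla\psi)dy,
\]
which is the required quasiconvexity inequality. The companion inequality $\Z W\leq\Z_\infty W$ from the second bullet is then obtained by approximating any $\varphi\in W^{1,\infty}_0(Y;\RR^m)$ by piecewise affine $\varphi_n\in\Aff_0(Y;\RR^m)$ whose gradients are uniformly bounded and converge in measure to $\nabla\varphi$, and passing to the limit in $\int_Y W(F+\nabla\varphi_n)dy$.

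The main obstacle is that $W$ can be $+\infty$ on a large set, so neither dominated nor monotone convergence applies directly: the near-optimal test functions $\varphi_i$ might have $\nabla\varphi_i$ skirting $\{W=+\infty\}$ on exceptional sets, and piecewise-affine interpolants of a Lipschitz $\varphi$ generically push gradients into this forbidden region. The technical heart of Theorem A, carried out in \cite{oah-jpm07,oah-jpm08b}, is to use the finiteness of $\Z_\infty W$ (resp.\ $\Z W$) \emph{quantitatively} --- ensuring that the perturbations and the interpolants stay, up to sets of arbitrarily small measure, inside the effective domain of $W$, and that the residual energy on the exceptional sets can be absorbed into $\varepsilon$. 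It is precisely this quantitative control that lets the Vitali-pasting and piecewise-affine approximation arguments push through when $W$ is no longer assumed continuous and finite as in Dacorogna's 1982 theorem.
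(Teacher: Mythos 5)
Your reduction to the two inequalities $\Z_\infty W\leq\Q W$ and $\Z W\leq\Z_\infty W$ is the right skeleton, and your Vitali pasting construction is correct as far as it goes: it proves exactly the inequality $\Z_\infty W\leq\Z\Z_\infty W$ (quoted in the paper as Fonseca's property (3)). But there is a genuine gap at the step ``testing quasiconvexity on such $\psi$ is enough''. What the pasting argument delivers is the quasiconvexity inequality for $\Z_\infty W$ against test functions in $\Aff_0(Y;\RR^m)$ only; passing from there to genuine quasiconvexity (test functions in $W^{1,\infty}_0(Y;\RR^m)$) is precisely the content of Dacorogna's identity $\Q g=\Z g$, which requires $g=\Z_\infty W$ to be \emph{continuous}. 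That continuity is not free: it is Fonseca's property (2), obtained from the fact that a finite $\Z_\infty W$ is rank-one convex, hence separately convex and locally Lipschitz. The paper's proof is exactly this soft chain: $\Z_\infty W$ finite $\Rightarrow$ continuous $\Rightarrow$ $\Q\Z_\infty W=\Z\Z_\infty W\geq\Z_\infty W$, combined with the free inequality $\Q\Z_\infty W=\Q W\leq\Z_\infty W$. Your write-up never invokes or establishes this continuity, and without it the key step does not close.

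The second bullet has a more serious problem. You propose to prove $\Z W\leq\Z_\infty W$ directly, by approximating a near-optimal Lipschitz competitor $\varphi$ for $\Z_\infty W(F)$ by piecewise affine maps $\varphi_n$ and passing to the limit in $\int_Y W(F+\nabla\varphi_n)dy$. Since $W$ is merely Borel and extended-valued, convergence in measure of the gradients (even with a uniform $L^\infty$ bound) gives no control on these integrals: they may be identically $+\infty$ along the whole approximating sequence. The finiteness of $\Z W$ tells you that \emph{some} piecewise affine competitor with finite energy exists for each matrix, not that the interpolants of a \emph{given} $\varphi$ can be kept inside the effective domain of $W$; your closing paragraph defers this difficulty rather than resolving it. The paper sidesteps it entirely: it never compares $\Z W$ and $\Z_\infty W$ at the level of $W$, but applies Dacorogna's theorem to the continuous finite function $\Z W$ to get $\Q\Z W=\Z\Z W=\Z W$ (the last equality again being Fonseca's property (3)), and then concludes $\Z W=\Q\Z W=\Q W=\Z_\infty W$. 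I would rework your argument along those lines.
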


\begin{proof}
\small We need (the two last assertions, the first one being used at the end of \S1.3, of)  the following result.
\begin{Theorem}[Fonseca \cite{fonseca88} 1988]\ \hskip150mm
\begin{itemize}
\item[(1)] If $\Z_{\infty} W$ (resp. $\Z W$)  is finite then $\Z_{\infty} W$ (resp. $\Z W$)  is rank-one convex. 
\item[(2)] If $\Z_{\infty} W$ (resp. $\Z W$) is finite then $\Z_{\infty} W$ (resp. $\Z W$) is continuous.
\item[(3)] $\Z_\infty W\leq \Z\Z_\infty W$ and $\Z \Z W=\Z W$.
\end{itemize}
\end{Theorem}
One always has $W\geq\Z W\geq \Z_\infty W\geq\Q W$. Hence:
\begin{itemize}
\item[(i)] $\Q\Z_\infty W=\Q W\leq\Z_\infty W$;
\item[(ii)] $\Q\Z W=\Q\Z_\infty W=\Q W$.
\end{itemize}

\small$\blacktriangleright$ If $\Z_\infty W$ is finite then $\Z_\infty W$ is continuous by the property (2) of Fonseca. From the first part of the Dacorogna theorem it follows that $\Q\Z_\infty W=\Z\Z_\infty W$. But $\Z_\infty W\leq\Z\Z_\infty W$ by the property (3) of Fonseca, and so $\Q W=\Z_\infty W$ by using (i).

\medskip

\small$\blacktriangleright$ If $\Z W$ is finite then also is $\Z_\infty W$. Hence $\Q W=\Z_\infty W$ by the previous reasoning. On the other hand, $\Z W$ is continuous by the property (2) of Fonseca. From the first part of the Dacorogna theorem it follows that $\Q\Z W=\Z\Z W$. But $\Z\Z W=\Z W$ by the property (3) of Fonseca, and so $\Q W=\Z W$ by using (ii).
\end{proof}

\newtheorem*{Question}{\bf Question}
\begin{Question}
Prove (or disprove) that if $\Z_\infty W$ is finite, also is $\Z W$.
\end{Question}

\subsection{Representation of $\overline{I}$: non-finite case}\ 

The  part (b) of the Dacorogna theorem can be extended as follows.

\newtheorem*{ThB}{\bf Theorem B}
\begin{ThB}[see \cite{oah-jpm07,oah-jpm08b,oah-jpm09}]\ \hskip150mm
\begin{itemize}
\item[\small$\blacktriangleright$] If 
$
\exists c>0\ \ \forall F\in\MM^{m\times N}\ \ \Z_\infty W(F)\leq c(1+|F|^p)
$
then
$$
\forall \phi\in W^{1,p}(\Omega;\RR^m)\ \ \displaystyle\overline{I}(\phi)=\int_\Omega \Q W(\nabla\phi(x))dx.
$$
\item[\small$\blacktriangleright$] If 
$
\exists c>0\ \ \forall F\in\MM^{m\times N}\ \ \Z  W(F)\leq c(1+|F|^p)
$
then
$$
\forall \phi\in W^{1,p}(\Omega;\RR^m)\ \ \displaystyle\overline{I}(\phi)=\overline{I}_{\rm aff}(\phi)=\int_\Omega \Q W(\nabla\phi(x))dx
$$
with $\overline{I}_{\rm aff}:W^{1,p}(\Omega;\RR^m)\to[0,+\infty]$ defined by
$$
\overline{I}_{\rm aff}(\phi):=\inf\left\{\liminf_{n\to+\infty}I(\phi_n):\Aff(\Omega;\RR^m)\ni\phi_n\stackrel{L^p}{\to}\phi\right\}.
$$
\end{itemize}
\end{ThB}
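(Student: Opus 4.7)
The plan is to combine Theorem A with a two-step construction (periodic oscillations on affine pieces, then a density argument) for the upper bound, and a standard lower semicontinuity argument for the matching lower bound. Throughout, the growth hypothesis on $\Z_\infty W$ (resp.\ $\Z W$) together with Theorem A ensures that $\Q W=\Z_\infty W$ (resp.\ $\Q W=\Z W=\Z_\infty W$) is continuous and satisfies $\Q W(F)\leq c(1+|F|^p)$, which is what makes the scheme work.

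\textbf{Lower bound.} If $\phi_n\stackrel{L^p}{\to}\phi$ with $\liminf I(\phi_n)<+\infty$, then by $p$-coercivity of $W$ the sequence $\nabla\phi_n$ is bounded in $L^p$, so $\phi_n\rightharpoonup\phi$ weakly in $W^{1,p}$. Since $W\geq\Q W$, we have $I(\phi_n)\geq\int_\Omega\Q W(\nabla\phi_n)dx$; and because $\Q W$ is quasiconvex, continuous and of $p$-growth (from Theorem A and the growth assumption), the classical Acerbi--Fusco lower semicontinuity theorem gives $\liminf\int_\Omega\Q W(\nabla\phi_n)dx\geq\int_\Omega\Q W(\nabla\phi)dx$. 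Hence $\overline{I}(\phi)\geq\int_\Omega\Q W(\nabla\phi)dx$.

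\textbf{Upper bound.} First take $\phi$ affine with $\nabla\phi\equiv F$. Fix $\epsilon>0$ and, using the definition of $\Z_\infty W$ (resp.\ $\Z W$), choose $\varphi\in W^{1,\infty}_0(Y;\RR^m)$ (resp.\ $\varphi\in\Aff_0(Y;\RR^m)$) with $\int_Y W(F+\nabla\varphi)dy\leq\Z_\infty W(F)+\epsilon$ (resp.\ $\leq\Z W(F)+\epsilon$). Extend $\varphi$ $Y$-periodically and set $\phi_n(x):=\phi(x)+\frac{1}{n}\varphi(nx)$. Then $\phi_n\to\phi$ uniformly, and the Riemann--Lebesgue lemma applied to the $L^1(Y)$-periodic function $y\mapsto W(F+\nabla\varphi(y))$ yields $I(\phi_n)\to|\Omega|\int_Y W(F+\nabla\varphi)dy$; letting $\epsilon\to0$ and invoking Theorem A gives $\overline{I}(\phi)\leq|\Omega|\Q W(F)=\int_\Omega\Q W(\nabla\phi)dx$. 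In case (b) the $\phi_n$'s are piecewise affine, so the same bound holds for $\overline{I}_{\rm aff}$. For piecewise affine $\phi$, the same construction is performed on each affine piece $\Omega_k$; since $\varphi$ vanishes on $\partial Y$ the local perturbations glue continuously across interfaces, so $\phi_n\in W^{1,p}(\Omega;\RR^m)$ (resp.\ $\phi_n\in\Aff(\Omega;\RR^m)$). Finally, for general $\phi\in W^{1,p}(\Omega;\RR^m)$, approximate it strongly in $W^{1,p}$ by $\phi^k\in\Aff(\Omega;\RR^m)$; by the preceding step $\overline{I}(\phi^k)\leq\int_\Omega\Q W(\nabla\phi^k)dx$, and the $p$-growth plus continuity of $\Q W$ together with Vitali's theorem (using $|\nabla\phi^k|^p\to|\nabla\phi|^p$ in $L^1$) yield $\int_\Omega\Q W(\nabla\phi^k)dx\to\int_\Omega\Q W(\nabla\phi)dx$. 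Combining with the $L^p$-lower semicontinuity of $\overline{I}$ (resp.\ $\overline{I}_{\rm aff}$) finishes the proof, a diagonal extraction producing a piecewise affine recovery sequence in the second bullet.

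\textbf{Main obstacle.} The delicate step is the last one: even when $\int_\Omega\Q W(\nabla\phi^k)dx$ is finite, $I(\phi^k)$ may be $+\infty$ on the approximating piecewise affine sequence, so one cannot work with $I$ directly and must route through $\overline{I}$ and its $L^p$-lower semicontinuity. This makes both the continuity of $\Q W$ (to get pointwise convergence of $\Q W(\nabla\phi^k)$) and the $p$-growth (to get equiintegrability) indispensable; both are supplied by Theorem A and Fonseca's continuity result, which explains why the growth hypothesis is imposed on $\Z_\infty W$ (or $\Z W$) rather than on $W$ itself.
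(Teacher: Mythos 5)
Your overall architecture (lower bound by quasiconvex lower semicontinuity, upper bound by constructing recovery sequences first for affine, then piecewise affine, then general $\phi$ by density and $L^p$-lower semicontinuity of $\overline{I}$) is a reasonable unpacking of the paper's route, which instead relaxes the auxiliary functional $\Z_\infty I(\phi)=\int_\Omega\Z_\infty W(\nabla\phi)\,dx$ via Dacorogna's part (b) and reduces everything to the single key lemma $\overline{I}\leq\overline{\Z_\infty I}_{\rm aff}$ (resp. $\overline{I}_{\rm aff}=\overline{\Z I}_{\rm aff}$). But your execution has a genuine gap exactly where that lemma does its work. In the piecewise affine step you claim that, because $\varphi$ vanishes on $\partial Y$, the perturbations $\tfrac{1}{n}\varphi_k(nx)$ built on the pieces $\Omega_k$ ``glue continuously across interfaces.'' This is false: the periodic extension of $\varphi_k$ rescaled by $1/n$ vanishes only on the grid $\tfrac{1}{n}(\ZZ^N+\partial Y)$, and $\partial\Omega_k$ is not contained in that grid, so on an interface between $\Omega_k$ and $\Omega_j$ the two perturbations $\tfrac{1}{n}\varphi_k(nx)$ and $\tfrac{1}{n}\varphi_j(nx)$ do not agree and the glued function is not even continuous, let alone in $W^{1,p}$ or $\Aff(\Omega;\RR^m)$. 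The obvious repair --- perturb only on the cells $\tfrac{1}{n}(z+Y)$ entirely contained in $\Omega_k$, leaving $\phi_n=\phi$ on the boundary layer --- fails for a different reason: on that layer $\nabla\phi_n=F_k$ and, since $W$ is not finite, $W(F_k)$ may equal $+\infty$, giving $I(\phi_n)=+\infty$. This is precisely the difficulty that distinguishes the non-finite case from Dacorogna's, and a cutoff argument cannot fix it.

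The correct tool is the Vitali \emph{covering} theorem (not the Vitali convergence theorem, which is what you actually invoke, and only in the final density step): cover each affine piece $\Omega_k$, up to a Lebesgue-null set, by countably many pairwise disjoint homothetic copies $a_i+\varepsilon_i Y\subset\Omega_k$ with $\sup_i\varepsilon_i$ small, and set $\phi_n=\phi+\varepsilon_i\varphi_k\big((x-a_i)/\varepsilon_i\big)$ on $a_i+\varepsilon_i Y$. Since $\varphi_k=0$ on $\partial Y$, the perturbation now genuinely vanishes on the boundary of each copy, the pieces glue, no set of positive measure is left unperturbed, and the energy equals $\sum_i\varepsilon_i^N|Y|\int_Y W(F_k+\nabla\varphi_k)\,dy=|\Omega_k|\int_Y W(F_k+\nabla\varphi_k)\,dy$. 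This is exactly the content of the paper's key lemmas ($\overline{I}\leq\overline{\Z_\infty I}_{\rm aff}$ and $\overline{I}_{\rm aff}=\overline{\Z I}_{\rm aff}$), which are stated as being proved ``by using the Vitali covering theorem.'' Your remaining steps (the Acerbi--Fusco lower bound, the passage from piecewise affine to general $\phi$ via strong density, the $p$-growth and continuity of $\Q W=\Z_\infty W$ supplied by Theorem A and Fonseca's result, and the lower semicontinuity of $\overline{I}$) are sound, so the proof is repaired once the periodic-oscillation construction is replaced by the Vitali covering construction.
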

\begin{proof}[\small\bf\em Outline of the proof] \small$\blacktriangleright$ Let $\Z_\infty I, \overline{\Z_\infty I}, \overline{\Z_\infty I}_{\rm aff}:W^{1,p}(\Omega;\RR^m)\to[0,+\infty]$ be respectively defined by:
\begin{itemize}
\item[\SMALL$\blacklozenge$] $\displaystyle\Z_\infty I(\phi):=\int_\Omega\Z_\infty W(\nabla\phi(x))dx$;
\item[\SMALL$\blacklozenge$] $\displaystyle\overline{\Z_\infty I}(\phi):=\inf\left\{\liminf_{n\to+\infty}\Z_\infty I(\phi_n):\phi_n\stackrel{L^p}{\to}\phi\right\}$;
\item[\SMALL$\blacklozenge$] $\displaystyle\overline{\Z_\infty I}_{\rm aff}(\phi):=\inf\left\{\liminf_{n\to+\infty}\Z_\infty I(\phi_n):\Aff(\Omega;\RR^m)\ni\phi_n\stackrel{L^p}{\to}\phi\right\}.$ 
\end{itemize}
Since $\Z_\infty W$ is of $p$-polynomial growth, i.e., $\exists c>0\ \forall F\in\MM^{m\times N}\ \Z_\infty W(F)\leq c(1+|F|^p)$, it follows that $\Z_\infty W$ is (finite and so) continuous by the property (2) of Fonseca. By the second part of the Dacorogna theorem we deduce that
$$
\forall\phi\in W^{1,p}(\Omega;\RR^m)\ \ \overline{\Z_\infty I}(\phi)=\int_\Omega \Q\Z_\infty W(\nabla\phi(x))dx.
$$
But one always has $\Q\Z_\infty W=\Q W$, hence
$$
\forall\phi\in W^{1,p}(\Omega;\RR^m)\ \ \overline{\Z_\infty I}(\phi)=\int_\Omega \Q W(\nabla\phi(x))dx.
$$
Thus, it suffices to prove that $\overline{I}\leq\overline{\Z_\infty I}$ (the reverse inequality being trivially true). The key point of the proof is that we can establish (by using the Vitali covering theorem and without assuming that $\Z_\infty W$ is of $p$-polynomial growth) the following lemma.
\newtheorem*{Lemma}{\bf Lemma}
\begin{Lemma}
$\overline{I}\leq\overline{\Z_\infty I}_{\rm aff}$.
\end{Lemma}
On the other hand, as $\Z_\infty W$ is of $p$-polynomial growth and $\Aff(\Omega;\RR^m)$ is strongly dense in $W^{1,p}(\Omega;\RR^m)$, it is easy to see that $\overline{\Z_\infty I}_{\rm aff}=\overline{\Z_\infty I}$, and the result follows.

\medskip

\small$\blacktriangleright$ Let $\Z I, \overline{\Z_\infty I}, \overline{\Z_\infty I}_{\rm aff}:W^{1,p}(\Omega;\RR^m)\to[0,+\infty]$ be respectively defined by:
\begin{itemize}
\item[\SMALL$\blacklozenge$] $\displaystyle\Z I(\phi):=\int_\Omega\Z W(\nabla\phi(x))dx$;
\item[\SMALL$\blacklozenge$] $\displaystyle\overline{\Z I}(\phi):=\inf\left\{\liminf_{n\to+\infty}\Z I(\phi_n):\phi_n\stackrel{L^p}{\to}\phi\right\}$;
\item[\SMALL$\blacklozenge$] $\displaystyle\overline{\Z I}_{\rm aff}(\phi):=\inf\left\{\liminf_{n\to+\infty}\Z I(\phi_n):\Aff(\Omega;\RR^m)\ni\phi_n\stackrel{L^p}{\to}\phi\right\}.$ 
\end{itemize}
As $\Z W$ is of $p$-polynomial growth and (so) continuous (by the property (2) of Fonseca), from the second part of the Dacorogna theorem (and since $\Q\Z W=\Q W$ is always true) we deduce that
$$
\forall\phi\in W^{1,p}(\Omega;\RR^m)\ \ \overline{\Z I}(\phi)=\int_\Omega \Q W(\nabla\phi(x))dx.
$$
It is then sufficient to prove that $\overline{I}_{\rm aff}\leq\overline{\Z I}$ (the inequalities $\overline{I}\leq\overline{I}_{\rm aff}$ and $\overline{\Z I}\leq\overline{I}$ being trivially true). The key point of the proof is that we can establish (by using the Vitali covering theorem and without assuming that $\Z W$ is of $p$-polynomial growth) the following lemma.
\begin{Lemma}
$\overline{I}_{\rm aff}=\overline{\Z I}_{\rm aff}$.
\end{Lemma}
On the other hand, as $\Z W$ is of $p$-polynomial growth and $\Aff(\Omega;\RR^m)$ is strongly dense in $W^{1,p}(\Omega;\RR^m)$, it is clear that $\overline{\Z I}_{\rm aff}=\overline{\Z I}$, and the result follows.
\end{proof}

We see here that the integrands $W$ for which $\Z_\infty W$ or $\Z W$ is of $p$-polynomial have a ``nice" behavior with respect to ($\mathcal{P}_1$). So, it could be interesting to introduce a new class of integrands (that we will call the class of $p$-ample\footnote{We use the term ``$p$-ample" because of some analogies with the concept (developed in differential geometry by Gromov) of amplitude of a differential relation (see \cite{gromov86} for more details).} integrands) as follows:
$$
\hbox{$W$ is $p$-ample $\iff$ $\exists c>0\ \ \forall F\in\MM^{m\times N}\ \ \Z_\infty W(F)\leq c(1+|F|^p)$.}
$$
Thus, Theorems A and B can be summarized as follows. 
\newtheorem*{ThA-B}{\bf Theorem A-B}
\begin{ThA-B}
If $W$ is $p$-ample then 
$$
\forall\phi\in W^{1,p}(\Omega;\RR^m)\ \ \overline{I}(\phi)=\int_\Omega\Q W(\nabla\phi(x))dx\hbox{ and $\Q W=\Z_\infty W$.}
$$ 
\end{ThA-B}
\begin{Question}
Prove (or disprove) that $W$ is $p$-ample if and only if $\Q W$ is of $p$-polynomial growth.
\end{Question}

An analogue result of Theorem B was proved by Ben Belgacem (who is in fact the first that obtained an integral representation for $\overline{I}$ in the non-finite case). Let  $\{\R_i W\}_{i\in\NN}$ be defined by $\R_0 W:=W$ and for each $i\in\NN^*$ and each $F\in\MM^{m\times N}$,
$$
\R_{i+1}W(F):=\infff\limits_{t\in[0,1]}\big\{(1-t)\R_i W(F-t a\otimes b)+t\R_i W(F+(1-t)a\otimes b)\big\}.
$$
By Kohn et Strang (see \cite{kohn-strang86}) we have $\R_{i+1}W\leq\R_i W$ for all $i\in\NN$ and $\R W=\inf_{i\geq 0}\R_i W$, where $\R W$ denotes the rank-one convex envelope of $W$. The Ben Belgacem theorem can be stated as follows.

\begin{Theorem}[Ben Belgacem \cite{benbelgacem96,benbelgacem00} 1996]\ \hskip150mm

Assume that{\rm:}
\begin{itemize}
\item[(BB$_1$)] $\OO_W:={\rm int}\Big\{F\in\MM^{m\times N}:\forall i\in\NN\ \Z\R_i W(F)\leq\R_{i+1}W(F)\Big\}$ is dense in $\MM^{m\times N};$
\item[(BB$_2$)] $\forall i\in\NN^*\ $ $\forall F\in\MM^{m\times N}\ $ $\forall\{F_n\}_n\subset \OO_W$ 
$$F_n\to F\then \R_i W(F)\geq\limsup\limits_{n\to+\infty}\R_i W(F_n);$$
\item[\SMALL$\blacklozenge$] $\exists c>0\ \ \forall F\in\MM^{m\times N}\ \ \R W(F)\leq c(1+|F|^p)$.
\end{itemize}
Then
$$
\forall\phi\in W^{1,p}(\Omega;\RR^m)\quad \displaystyle\overline{I}(\phi)=\int_\Omega \Q\R W(\nabla\phi(x))dx.
$$
\end{Theorem}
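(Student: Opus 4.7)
The plan is to bridge $I$ and $\int_\Omega\Q\R W(\nabla\phi)\,dx$ through the Kohn--Strang filtration $\{\R_i W\}_{i\in\NN}$ and to invoke Theorem B on the limit $\R W$. Set $I_i(\phi):=\int_\Omega\R_i W(\nabla\phi)\,dx$, so that $I_0=I$ and $I_{i+1}\leq I_i$. The core claim is that the $L^p$-relaxation $\overline{I_i}$ is \emph{independent of $i\in\NN$}, and the whole argument then consists in passing to the limit $i\to+\infty$.

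The induction step $\overline{I_i}=\overline{I_{i+1}}$ reduces to the non-trivial inequality $\overline{I_i}\leq\overline{I_{i+1}}$, and this is where (BB$_1$) and (BB$_2$) enter. Fix $F\in\OO_W$ and let $\phi_F$ be the affine map with $\nabla\phi_F=F$. The Vitali-covering lemma underlying the outline of Theorem B, applied with $\R_i W$ in place of $W$, yields a piecewise affine sequence $\phi_n\stackrel{L^p}{\to}\phi_F$ with $I_i(\phi_n)\to|\Omega|\,\Z\R_i W(F)$. By (BB$_1$), $\Z\R_i W(F)\leq\R_{i+1}W(F)$, so $\overline{I_i}(\phi_F)\leq|\Omega|\,\R_{i+1}W(F)$ whenever $F\in\OO_W$. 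Hypothesis (BB$_2$)---upper semicontinuity of $\R_i W$ along sequences in $\OO_W$---together with the density of $\OO_W$, propagates the bound to every $F\in\MM^{m\times N}$. A standard piecewise-affine approximation in $W^{1,p}$ (legitimate because $\R W$, hence $\R_{i+1}W$, has $p$-polynomial growth) upgrades the estimate from affine test maps to arbitrary $\phi\in W^{1,p}(\Omega;\RR^m)$, delivering $\overline{I_i}(\phi)\leq\overline{I_{i+1}}(\phi)$.

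Iterating gives $\overline{I}=\overline{I_i}$ for every $i\in\NN$. Taking the constant sequence in the definition of $\overline{I_i}$ yields $\overline{I}(\phi)\leq I_i(\phi)$, and decreasing monotone convergence ($\R_i W\searrow\R W$, both nonnegative) then gives the pointwise bound $\overline{I}(\phi)\leq\int_\Omega\R W(\nabla\phi)\,dx=:J(\phi)$. Since $\overline{I}$ is $L^p$-lower semicontinuous and $\leq J$, it lies below the $L^p$-relaxation $\overline{J}$. But $\R W$ is rank-one convex and of $p$-polynomial growth, hence automatically continuous, so the second part of Dacorogna's theorem applied to $\R W$ gives $\overline{J}(\phi)=\int_\Omega\Q\R W(\nabla\phi)\,dx$. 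Combined, $\overline{I}(\phi)\leq\int_\Omega\Q\R W(\nabla\phi)\,dx$. The reverse inequality is the classical lower bound: $\Q\R W$ is quasiconvex and dominated by $W$, so $\phi\mapsto\int_\Omega\Q\R W(\nabla\phi)\,dx$ is sequentially weakly $W^{1,p}$-lower semicontinuous and $\leq I$, hence $\leq\overline{I}$. The main obstacle is the induction step: ensuring that the Vitali-covering construction works for the possibly infinite-valued and a priori discontinuous integrands $\R_i W$---precisely the role of (BB$_1$) and (BB$_2$).
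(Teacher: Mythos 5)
The paper does not actually prove this theorem: it is imported verbatim from Ben Belgacem's work \cite{benbelgacem96,benbelgacem00}, so your proposal can only be judged on its own merits. Your architecture (induction along the Kohn--Strang iterates $\R_i W$, the one-cell estimate from (BB$_1$), propagation to all matrices via (BB$_2$) and density of $\OO_W$, then the finite, $p$-polynomial-growth theory applied to $\R W$ at the end) is the right one and is essentially Ben Belgacem's strategy. However, there is a genuine gap in the execution, and it appears twice, both times because you insist on running the induction for \emph{arbitrary} $\phi\in W^{1,p}$. First, your justification of the passage from affine test maps to general $\phi$ in the step $\overline{I_i}\leq\overline{I_{i+1}}$ rests on the claim that ``$\R W$, hence $\R_{i+1}W$, has $p$-polynomial growth''. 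This is backwards: $\R W=\inf_i\R_i W\leq\R_{i+1}W$, so an upper bound on $\R W$ says nothing about $\R_{i+1}W$, which may well be $+\infty$ on a large set (the whole point of the paper is that $W=\R_0W$ is not finite). Without polynomial growth and continuity of $\R_{i+1}W$, the ``standard piecewise-affine approximation'' does not give $I_{i+1}(\phi_k)\to I_{i+1}(\phi)$, and the induction step collapses for general $\phi$. Second, the concluding ``decreasing monotone convergence'' from $\overline{I}(\phi)\leq\int_\Omega\R_i W(\nabla\phi)\,dx$ to $\overline{I}(\phi)\leq\int_\Omega\R W(\nabla\phi)\,dx$ requires some $\R_{i_0}W(\nabla\phi)$ to be integrable; for a general $\phi$ every $I_i(\phi)$ may equal $+\infty$ while $\int_\Omega\R W(\nabla\phi)\,dx$ is finite, so the inequality $\inf_i I_i(\phi)\leq\int_\Omega\R W(\nabla\phi)\,dx$ is exactly what fails.

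Both defects are repaired by the same modification: carry out the induction only at the level of affine (and, by localization, piecewise affine) maps, proving $\overline{I}(\phi_F)\leq|\Omega|\,\R_i W(F)$ for every $F$ and every $i$ --- here $\inf_i\R_i W(F)=\R W(F)$ is a pointwise statement with no integration involved --- and postpone the density upgrade to the very end, where it is applied to $\Q\R W$. The latter is quasiconvex, finite, of $p$-polynomial growth and hence continuous, so $\phi\mapsto\int_\Omega\Q\R W(\nabla\phi)\,dx$ is strongly $W^{1,p}$-continuous while $\overline{I}$ is $L^p$-lower semicontinuous; combined with the strong density of piecewise affine maps this legitimately extends the upper bound from piecewise affine $\phi$ to all of $W^{1,p}(\Omega;\RR^m)$. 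Your remaining steps (the identification $\overline{J}(\phi)=\int_\Omega\Q\R W(\nabla\phi)\,dx$ via Dacorogna's theorem applied to the continuous, $p$-coercive, polynomially bounded integrand $\R W$, and the lower bound via sequential weak lower semicontinuity of the quasiconvex functional dominated by $I$) are sound.
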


Generally speaking, as rank-one convexity and quasiconvexity do not coincide, Theorem B and the Ben Belgacem theorem are not identical. However, we have

\begin{Lemma} If either $\Z_\infty W$ or $\Z W$ is finite then $\Q\R W=\Q W$.
\end{Lemma}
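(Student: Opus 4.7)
\small The plan is to split the lemma into two pieces: a purely formal half that uses only the definitions of the envelopes $\Q$ and $\R$, and a second half where the hypothesis is used solely to identify $\Q W$ itself as a bona fide quasiconvex function.

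First, I would record the unconditional sandwich $\Q W\leq\R W\leq W$. Indeed, $\R W\leq W$ by definition, and every quasiconvex function is rank-one convex, so the largest quasiconvex minorant of $W$ is dominated by the largest rank-one convex minorant. From the monotonicity property $f\leq g\Then \Q f\leq\Q g$ (an immediate consequence of the variational characterisation of $\Q$ as the largest quasiconvex minorant), the inequality $\R W\leq W$ gives at once $\Q\R W\leq\Q W$.

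For the reverse inequality, I would invoke the hypothesis. Assuming either $\Z_\infty W$ or $\Z W$ is finite, Theorem A yields $\Q W=\Z_\infty W$, and Fonseca's property~(2) ensures that this common value is a finite continuous function on $\MM^{m\times N}$. The finite case~(a) of the Dacorogna theorem, applied to the continuous finite integrand $\Z_\infty W$, then gives $\Q\Z_\infty W=\Z\Z_\infty W$. Combining this with Fonseca's property~(3), namely $\Z_\infty W\leq \Z\Z_\infty W$, and the trivial inequality $\Z g\leq g$ (take $\varphi=0$ in the definition of $\Z$), one concludes $\Z\Z_\infty W=\Z_\infty W$, hence $\Q\Z_\infty W=\Z_\infty W$. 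In other words, $\Q W$ is itself quasiconvex. Together with the already-established $\Q W\leq\R W$, this exhibits $\Q W$ as a quasiconvex minorant of $\R W$, and the definition of $\Q\R W$ as the largest such minorant forces $\Q W\leq\Q\R W$.

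The two inequalities combine to give $\Q\R W=\Q W$. The only non-formal step is the bootstrap showing that $\Q W$ is actually quasiconvex, and I expect this to be the main (and only) real obstacle: it is a genuinely delicate point when $W$ is allowed to take the value $+\infty$, and here it is settled precisely by the non-trivial content of Theorem~A together with Fonseca's regularity statements. Everything else is pure monotonicity of the envelopes $\Q$ and $\R$.
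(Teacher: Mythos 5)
Your two-inequality decomposition is sound, and the easy half ($\Q\R W\leq\Q W$, obtained from $\R W\leq W$ and monotonicity of $\Q$) is exactly right and matches the paper. The second half also lands in the right place, but one step is not justified as you state it: you record $\Q W\leq\R W$ as part of an \emph{unconditional} sandwich, on the grounds that ``every quasiconvex function is rank-one convex.'' For integrands allowed to take the value $+\infty$ --- which is the whole point of this paper --- that implication is not available: the standard proof of ``quasiconvex $\then$ rank-one convex'' uses finiteness, and the paper itself is careful never to invoke it for extended-real-valued functions (this is precisely why, in \S 1.7, Fonseca's assertion (F$_1$) is quoted before concluding $\Q W\leq\R W$ there). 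So the inequality $\Q W\leq\R W$ has to be earned from the hypothesis of the lemma, not posited up front.

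The repair is immediate and essentially puts you on the paper's own route. Under the hypothesis, Fonseca's property (1) says that the \emph{finite} function $\Z_\infty W$ (resp.\ $\Z W$) is rank-one convex; since $\Z_\infty W\leq W$, it is a rank-one convex minorant of $W$, whence $\Z_\infty W\leq\R W\leq W$. Applying $\Q$ and using the always-true identity $\Q\Z_\infty W=\Q W$ gives $\Q W\leq\Q\R W\leq\Q W$, and you are done --- without ever needing the bootstrap ``$\Q W$ is quasiconvex,'' which, with $\Q W$ defined as the largest quasiconvex minorant, holds by definition rather than by the Dacorogna--Fonseca argument you sketch; the genuinely non-formal point here is rank-one convexity, not quasiconvexity. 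Alternatively, keep your own phrasing but note that by Theorem A the function $\Q W=\Z_\infty W$ is \emph{finite}, and finite quasiconvex functions are rank-one convex, so $\Q W\leq\R W$ does hold under the hypothesis. Either way the conclusion stands; the only defect is the claim that the left-hand inequality of your sandwich is unconditional.
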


\begin{proof}[\small\bf\em Proof]
\small If $\Z_\infty W$ (resp. $\Z W$) is finite then $\Z_\infty W$ (resp. $\Z W$) is rank-one convex by the property (1) of Fonseca. Consequently $\Z_\infty W\leq\R W$ (resp. $\Z W\leq\R W$) (and Theorem B$^\prime$ below follows by applying Theorem B). Thus, we have $\Z_\infty W\leq\R W\leq W$ (resp. $\Z_\infty W\leq\R W\leq W$), hence $\Q\Z_\infty W\leq\Q\R W\leq \Q W$ (resp. $\Q\Z W\leq\Q\R W\leq \Q W$) and so $\Q\R W=\Q W$ since one always has $\Q\Z_\infty W=\Q W$ (resp. $\Q\Z W=\Q W$).
\end{proof}

\newtheorem*{ThB-bis}{\bf Theorem B$^\prime$}
\begin{ThB-bis}
Assume that $\exists c>0\ \forall F\in\MM^{m\times N}\ \R W(F)\leq c(1+|F|^p)$. Then{\rm :}
\begin{itemize}
\item[\small$\blacktriangleright$] if $\Z_\infty W$ is finite then 
$$\displaystyle\forall \phi\in W^{1,p}(\Omega;\RR^m)\ \ \displaystyle\overline{I}(\phi)=\int_\Omega \Q W(\nabla\phi(x))dx;
$$ 
\item[\small$\blacktriangleright$] if $\Z W$ is finite then 
$$
\forall \phi\in W^{1,p}(\Omega;\RR^m)\ \ \displaystyle\overline{I}(\phi)=\overline{I}_{\rm aff}(\phi)=\int_\Omega \Q W(\nabla\phi(x))dx.
$$
\end{itemize}
\end{ThB-bis}

\begin{Question}
Prove (or disprove) that if {\rm(BB$_1$)} and {\rm(BB$_2$)} hold then $\Z W$ is finite.
\end{Question}

\subsection{Application 1: ``non-zero-Cross Product Constraint"}\ 

Consider $W_0:\MM^{3\times 2}\to[0,+\infty]$ Borel measurable and $p$-coercive and the following condition 
\begin{itemize}
\item[(P)] $\exists\alpha,\beta>0\ \forall \xi=(\xi_1\mid \xi_2)\in\MM^{3\times 2}\ \big(|\xi_1\land \xi_2|\geq\alpha\then W_0(\xi)\leq\beta(1+|\xi|^p)\big)$
\end{itemize}
with $\xi_1\land\xi_2$ denoting the cross product of vectors $\xi_1,\xi_2\in\RR^3$. When $W_0$ satisfies (P) it is compatible with the ``non-zero-Cross Product Constraint", i.e., with the following two conditions:
$$
\hbox{($*$-CPC)}\left\{\begin{array}{l}W_0(\xi_1\mid\xi_2)=+\infty\iff |\xi_1\land\xi_2|=0\\
W_0(\xi_1\mid\xi_2)\to+\infty\hbox{ as }|\xi_2\land\xi_2|\to 0.\end{array}\right.
$$
 The interest of considering ($*$-CPC) comes from the 3d-2d problem (see \S 2): if $W$ is compatible with (s-DC) then $W_0$ given by $W_0(\xi):=\inf_{\zeta\in\RR^3} W(\xi\mid\zeta)$ is compatible with ($*$-CPC). One can prove that
 $$
{\rm(P)}\then\exists c>0\ \forall F\in\MM^{3\times 2}\ \Z W(F)\leq c(1+|F|^p)
$$
(see \cite{oah-jpm07,oah-jpm08a,oah-jpm09}) which roughly means that the ``non-zero Cross Product Constraint" is $p$-ample. Applying Theorem B we obtain

\newtheorem*{CorollaryA}{\bf Corollary 1}
\begin{CorollaryA}
If $W_0$ satisfies {\rm(P)} then
$$
\forall \psi\in W^{1,p}(\Omega;\RR^3)\ \ \displaystyle\overline{I}(\psi)=\overline{I}_{\rm aff}(\psi)=\int_\Omega \Q W_0(\nabla\psi(x))dx.
$$
\end{CorollaryA}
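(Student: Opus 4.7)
The plan is to reduce Corollary~1 to the second bullet of Theorem~B, which delivers the stated conclusion as soon as $\Z W_0$ is of $p$-polynomial growth. Everything therefore hinges on the implication
$$
{\rm(P)}\ \Longrightarrow\ \exists c>0\ \ \forall F\in\MM^{3\times 2}\ \ \Z W_0(F)\leq c(1+|F|^p),
$$
which is the one mentioned (with references \cite{oah-jpm07,oah-jpm08a,oah-jpm09}) immediately before the corollary. Verifying this is the entire content of the proof; everything else is a direct invocation of Theorem~B, whose hypotheses (Borel measurability and $p$-coercivity of $W_0$) are inherited from $W_0$.

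I would prove the implication by a piecewise affine perturbation argument in the very definition of $\Z W_0$. Fix $F=(\xi_1\mid\xi_2)\in\MM^{3\times 2}$. When $|\xi_1\land\xi_2|\geq\alpha$ the trivial choice $\varphi\equiv0\in\Aff_0(Y;\RR^3)$ works thanks to (P). Otherwise I build $\varphi$ as a rank-one laminate with $\nabla\varphi\in\{(1-t)a\otimes b,\,-t\,a\otimes b\}$ on alternating thin slabs of $Y$, choosing $a\in\RR^3$, $b\in\RR^2$ and $t\in[0,1]$ so that the two perturbed matrices $F+(1-t)a\otimes b$ and $F-t\,a\otimes b$ both lie in the good set $\{(\eta_1\mid\eta_2):|\eta_1\land\eta_2|\geq\alpha\}$. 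A short computation (the cross product of the perturbed columns is affine in the laminate parameter) shows that such $a,b$ always exist with $|a|\leq C(1+|F|)$; the zero boundary condition required to enter $\Aff_0(Y;\RR^3)$ is then recovered by the usual refine-and-cutoff procedure based on a Vitali covering of $Y$ by small cubes, the same mechanism that underlies the key lemmas used in the proof of Theorem~B.

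The principal obstacle is this construction when $F$ itself sits on or very near the singular set $\{\xi_1\land\xi_2=0\}$: in the doubly degenerate case $\xi_2=0$ a single lamination is powerless and one must either laminate along a direction $b=(b_1,b_2)$ with both $b_i\neq 0$ or iterate two rank-one laminates, and in either variant $a$ must be pushed uniformly away from the forbidden configurations while keeping $|a|$ linear in $|F|$; the boundary cutoff layer also has to be handled with care because $W_0$ blows up near the singular set. Once an admissible $\varphi$ is produced, (P) applied slab by slab yields $\int_Y W_0(F+\nabla\varphi(y))\,dy\leq c(1+|F|^p)$, hence the required bound on $\Z W_0(F)$. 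Feeding this into the second bullet of Theorem~B then gives both $\overline{I}(\psi)=\overline{I}_{\rm aff}(\psi)$ and the integral representation $\int_\Omega\Q W_0(\nabla\psi(x))\,dx$, completing Corollary~1.
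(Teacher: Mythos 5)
Your reduction is exactly the paper's: Corollary~1 is obtained by checking that (P) forces $\Z W_0$ to be of $p$-polynomial growth and then invoking the second bullet of Theorem~B, which is precisely how the paper proceeds (it states the implication ${\rm(P)}\then\Z W_0\leq c(1+|\cdot|^p)$ with references rather than proving it in this outline). Your laminate sketch of that implication is in the spirit of the cited works, though, as you yourself note, the degenerate configurations (e.g.\ $F=0$, where a single lamination cannot leave the set $\{\xi_1\land\xi_2=0\}$, and the boundary cut-off layer, on which $W_0$ is uncontrolled) are exactly where the full piecewise affine construction of those references is needed.
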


\subsection{Application 2: ``weak-Determinant Constraint"}\ 

The following condition on $W$ is compatible with (w-DC).
\begin{itemize}
\item[(D)] $\exists\alpha,\beta>0\ \forall F\in\MM^{N\times N}\ \big(|\det F|\geq\alpha\then W(F)\leq\beta(1+|F|^p)\big)$.
\end{itemize}
One can prove that 
$$
{\rm(D)}\then\exists c>0\ \forall F\in\MM^{N\times N}\ \Z W(F)\leq c(1+|F|^p)
$$
(see \cite{oah-jpm08b,oah-jpm09}) which roughly means that the ``weak-Determinant Constraint" is $p$-ample. Applying Theorem B we obtain
\newtheorem*{CorollaryB}{\bf Corollary 2}
\begin{CorollaryB}
If $W$ satisfies {\rm(D)} then
$$
\forall \phi\in W^{1,p}(\Omega;\RR^N)\ \ \displaystyle\overline{I}(\phi)=\overline{I}_{\rm aff}(\phi)=\int_\Omega \Q W(\nabla\phi(x))dx.
$$
\end{CorollaryB}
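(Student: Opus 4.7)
The plan is to reduce to Theorem B. The second part of that theorem applies to any $W$ for which $\Z W(F)\leq c(1+|F|^p)$, so the whole task is to upgrade condition (D)---which bounds $W$ only on the ``good set'' $\{F:|\det F|\geq\alpha\}$---to the same polynomial bound for $\Z W$ on all of $\MM^{N\times N}$. Once this growth estimate on $\Z W$ is established, Theorem B delivers both equalities in the statement with no further work.

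To prove the growth estimate, I split into two cases. When $|\det F|\geq\alpha$, the bound is immediate: taking $\varphi=0$ in the definition of $\Z W$ and invoking (D) gives $\Z W(F)\leq W(F)\leq\beta(1+|F|^p)$. The substantive work is in the case $|\det F|<\alpha$, where I must exhibit a test $\varphi\in\Aff_0(Y;\RR^N)$ such that $F+\nabla\varphi(y)$ lies in the good set for a.e.\ $y\in Y$ while $|F+\nabla\varphi|\leq C(1+|F|)$ uniformly on $Y$. The natural device is a rank-one lamination: writing $F$ as the convex combination
$$
F=\lambda(F+s\,a\otimes b)+(1-\lambda)\bigl(F-\tfrac{\lambda s}{1-\lambda}\,a\otimes b\bigr)
$$
and exploiting that $s\mapsto\det(F+s\,a\otimes b)$ is affine with nonzero slope for a suitable choice of $(a,b)$ (provided $\cof F\neq 0$), one can push both resulting determinants outside $[-\alpha,\alpha]$ and then realise this convex combination as a piecewise affine function on $Y$ vanishing on $\partial Y$; integrating $W$ against the two pieces and using (D) on each then yields the desired bound on $\Z W(F)$.

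The expected main obstacle is the rank-deficient case $\cof F=0$, occurring whenever $\mathrm{rank}\,F\leq N-2$, because a single rank-one perturbation then leaves the determinant stuck at zero. My plan here is to iterate the construction: a first lamination moves $F$ onto matrices of strictly higher rank, and further laminations on each resulting piece continue until every piece has rank $\geq N-1$ (so that $\cof\neq 0$), whereupon the previous step shifts the determinants outside $[-\alpha,\alpha]$. The delicate bookkeeping is to guarantee simultaneously the $L^\infty$ bound $|F+\nabla\varphi|\leq C(1+|F|)$ and the boundary condition $\varphi\in\Aff_0(Y;\RR^N)$ through all lamination steps; this is the technical heart of the argument, carried out in \cite{oah-jpm08b,oah-jpm09}. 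Combining the two cases yields $\Z W(F)\leq c(1+|F|^p)$ on all of $\MM^{N\times N}$, and applying Theorem B completes the proof.
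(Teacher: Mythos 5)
Your high-level strategy (show that (D) forces $\Z W(F)\leq c(1+|F|^p)$ for every $F$, then invoke the second part of Theorem B) is exactly the route the paper announces in \S 1.6, and your treatment of the case $|\det F|\geq\alpha$ (take $\varphi=0$) is correct. The gap is in the case $|\det F|<\alpha$. A two-gradient laminate cannot be realised exactly by a map in $\Aff_0(Y;\RR^N)$: by two-gradient rigidity, any test function whose gradient is meant to oscillate between $A-F$ and $B-F$ with zero boundary values must take \emph{other} gradient values on a set of positive measure (the boundary-matching transition pieces), and condition (D) gives no information whatsoever about $W$ on matrices with $|\det|<\alpha$, where $W$ may equal $+\infty$. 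Making the transition set small does not help, since $+\infty$ times a small measure is still $+\infty$. Hence the inequality $\Z W(F)\leq\lambda W(A)+(1-\lambda)W(B)+o(1)$ that your construction implicitly uses is not available; the ``delicate bookkeeping'' you defer to the references is not merely the $L^\infty$ bound and the boundary condition through the lamination steps, but the requirement that \emph{every} affine piece of the test function, including all transition pieces, land in the good set $\{|\det|\geq\alpha\}$. This is precisely why the inequality $\Z\R_i W\leq\R_{i+1}W$ appears as the hypothesis (BB$_1$) in Ben Belgacem's theorem rather than as a fact, and why the construction behind the estimate on $\Z W$ in \cite{oah-jpm08b,oah-jpm09} is a bespoke piecewise affine construction rather than an iterated laminate.

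For comparison, the paper's own (partial) proof sidesteps this difficulty: it applies Theorem B$^\prime$, for which it suffices that $\R W$ have $p$-polynomial growth --- a lemma of Ben Belgacem that only involves convex combinations along rank-one segments and never requires realising them by admissible test functions --- and that $\Z_\infty W$ be merely \emph{finite}; the latter is obtained from the Dacorogna--Ribeiro lemma, which produces a Lipschitz (not piecewise affine) $\varphi$ with $|\det(F+\nabla\varphi)|=\alpha$ exactly almost everywhere, so that (D) applies pointwise a.e. That argument yields $\overline{I}(\phi)=\int_\Omega\Q W(\nabla\phi(x))\,dx$ but not the equality with $\overline{I}_{\rm aff}$, which in the paper rests on the cited, unproved-here, $p$-growth estimate for $\Z W$. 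If you want to complete your route, you must either reproduce that explicit construction or explain why all gradient values of your $\varphi$ can be confined to $\{|\det|\geq\alpha\}$; as written, the laminate device does not do this.
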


\begin{proof}[\small\bf\em Proof of a part of Corollary 2]
\small Taking the first part of Theorem B$^\prime$ into account, it suffices to verify the following two points:
\begin{itemize}
\item[\SMALL$\blacklozenge$] (D) $\then$ $\exists c>0\ \forall F\in\MM^{N\times N}\ \R W(F)\leq c(1+|F|^p)$;
\item[\SMALL$\blacklozenge$] (D) $\then$ $\Z_\infty W<+\infty$,
\end{itemize}
which will give us the desired integral representation for $\overline{I}$. The first point is due to a lemma by Ben Belgacem (see \cite{benbelgacem96}, see also \cite{oah-jpm09}). For the second point, it is obvious that  $\Z_\infty W(F)<+\infty$ for all $F\in\MM^{N\times N}$ with $|\det F|\geq \alpha$. On the other hand, we have
\newtheorem*{small-lemma}{\small\bf Lemma}
\begin{small-lemma}[Dacorogna-Ribeiro \cite{daco-rib04} 2004, see also \cite{celada-perrotta98}]\ \hskip150mm

$\forall F\in\MM^{N\times N}\ \big(|\det F|<\alpha\then \exists\varphi\in W^{1,\infty}(Y;\RR^N)\ \ |\det(F+\nabla\varphi(x))|=\alpha\ \hbox{ p.p. dans }Y\big)$.
\end{small-lemma}
Hence, if $F\in\MM^{N\times N}$ is such that $|\det F|<\alpha$ then $\Z_\infty W(F)\leq\int_{Y}W(F+\nabla\varphi(x))dx$ with some $\varphi\in W^{1,\infty}(Y;\RR^N)$ given by the lemma above, and so $\Z_\infty W(F)\leq 2^p\beta(1+|F|^p+\|\nabla\varphi\|^p_{L^p})<+\infty$.
\end{proof}

\subsection{From $p$-ample to non-$p$-ample case}\ 

Because of the following theorem, none of the theorems of this section  can be directly used for dealing with ($\mathcal{P}_1$) under (s-DC).
\begin{Theorem}[Fonseca \cite{fonseca88} 1988]\ 

If $W$ satisfies {\rm(s-DC)} then{\rm:}
\begin{itemize}
\item[(F$_1$)] $\Q W$ is rank-one convex{\rm;}
\item[(F$_2$)] $\Q W(F)=+\infty$ if and only if $\det F\leq 0$ and $\Q W(F)\to+\infty$ as $\det F\to 0^+$.
\end{itemize}
\end{Theorem}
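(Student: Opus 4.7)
The plan is to establish (F$_1$) and (F$_2$) by combining standard properties of the quasiconvex envelope in the extended-real setting with the null-Lagrangian character of the determinant. Two preliminaries will be used throughout: (i) since $F\mapsto C|F|^p$ is convex (hence quasiconvex) and $\leq W$ by $p$-coercivity, it lies below $\Q W$, so $\Q W(F)\geq C|F|^p$ and $\Q W$ is itself $p$-coercive; (ii) $\Q W$, as the largest l.s.c.\ quasiconvex function below $W$, is itself quasiconvex and lower semicontinuous (Morrey's inequality being preserved under pointwise suprema).

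For (F$_1$), since $\Q W$ is quasiconvex by (ii), it suffices to verify that any quasiconvex $g:\MM^{N\times N}\to[0,+\infty]$ is rank-one convex. Given $F_2-F_1=a\otimes b$, $t\in(0,1)$, and $F_t:=tF_1+(1-t)F_2$, the inequality $g(F_t)\leq tg(F_1)+(1-t)g(F_2)$ is trivial when either $g(F_i)=+\infty$. Otherwise I would construct a simple laminate: a sequence $\varphi_n\in W^{1,\infty}_0(Y;\RR^N)$ whose gradient oscillates in strips orthogonal to $b$ between $-(1-t)a\otimes b$ and $t\,a\otimes b$ in volume fractions $t$ and $1-t$, glued by a thin transition layer with gradient controlled by a cut-off. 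Applying Morrey's inequality $g(F_t)\leq\int_Y g(F_t+\nabla\varphi_n(y))dy$ and passing to the limit yields the rank-one convexity of $g$. The main technical obstacle of the proof lies here: one must bound the transition-layer contribution even though $g$ may take the value $+\infty$ off the segment $[F_1,F_2]$; this requires that $g$ be controlled in a neighborhood of that segment, which holds in our setting because $\det$ is affine along the rank-one segment and stays strictly positive on it.

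For the iff of (F$_2$), the direction $\det F>0\Then\Q W(F)<+\infty$ is immediate from $\Q W\leq W$ and (s-DC). For the converse I would prove the stronger assertion that the polyconvex envelope $\mathcal{P}W$ (which satisfies $\mathcal{P}W\leq\Q W$) already equals $+\infty$ on $\{\det F\leq 0\}$. Using the Carath\'eodory-type representation
$$
\mathcal{P}W(F)=\inf\Big\{\sum_{i=1}^{\tau+1}\lambda_iW(F_i):\lambda_i\geq 0,\ \sum_i\lambda_i=1,\ \sum_i\lambda_iM(F_i)=M(F)\Big\},
$$
with $M(F)$ the vector of all minors of $F$, any finite-energy decomposition must have $W(F_i)<+\infty$, and hence $\det F_i>0$ for every $i$ by (s-DC). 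Reading off the $\det$-component of the affine constraint gives $\det F=\sum_i\lambda_i\det F_i>0$, contradicting $\det F\leq 0$; so $\mathcal{P}W(F)=+\infty$, and therefore $\Q W(F)=+\infty$.

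Finally, for the blow-up assertion I would argue by contradiction: assume $\det F_n\to 0^+$ with $\Q W(F_n)\leq C$. By the $p$-coercivity of $\Q W$, the sequence $\{F_n\}$ is bounded in $\MM^{N\times N}$, so after extraction $F_n\to F^\ast$ with $\det F^\ast=0$. The previous step gives $\Q W(F^\ast)=+\infty$, and the lower semicontinuity of $\Q W$ yields $\liminf_n\Q W(F_n)\geq\Q W(F^\ast)=+\infty$, contradicting the bound $\Q W(F_n)\leq C$.
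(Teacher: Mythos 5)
This theorem is quoted from Fonseca's 1988 paper and is not proved in the present paper, so there is no in-paper argument to compare against; your proposal can only be assessed on its own terms. The part that works is the equivalence in (F$_2$): the direction $\det F>0\then\Q W(F)\leq W(F)<+\infty$ is immediate, and your polyconvex-envelope computation correctly forces $\det F=\sum_i\lambda_i\det F_i>0$ for any finite-energy decomposition. (It can even be streamlined: the function equal to $0$ on $\{\det F>0\}$ and $+\infty$ on $\{\det F\leq 0\}$ is itself quasiconvex, because $\int_Y\det(F+\nabla\varphi(y))\,dy=\det F$ forces $\det(F+\nabla\varphi)\leq 0$ on a set of positive measure whenever $\det F\leq 0$; since it lies below $W$ by (s-DC), it lies below $\Q W$. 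This avoids invoking the Carath\'eodory representation of the polyconvex envelope, whose validity for $[0,+\infty]$-valued $W$ is itself a nontrivial point.)

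The other two parts have genuine gaps. First, your preliminary (ii) is false as stated: $\Q W$ is the largest quasiconvex function below $W$, and a pointwise supremum of quasiconvex functions is quasiconvex but not lower semicontinuous in general; nothing in the definition supplies l.s.c. Worse, the blow-up assertion of (F$_2$) \emph{is} a lower-semicontinuity statement at $\{\det F=0\}$ (it asserts $\liminf_n\Q W(F_n)=+\infty=\Q W(F^\ast)$ whenever $\det F_n\to 0^+$), so deducing it from ``$\Q W$ is l.s.c.'' is circular: you have assumed exactly the hard part. A genuine proof goes through an explicit polyconvex minorant $h(\det F)\leq W(F)$ with $h$ convex, equal to $+\infty$ on $(-\infty,0]$ and blowing up at $0^+$, and one must verify that convexification of $t\mapsto\inf\{W(F):\det F=t\}$ preserves the blow-up; none of this appears in your sketch. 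Second, for (F$_1$), ``quasiconvex $\then$ rank-one convex'' is a theorem for finite-valued integrands only; for $[0,+\infty]$-valued ones it can fail, and the obstruction is precisely the one you name but do not overcome: the transition layer of the laminate produces gradients that are \emph{not} in a neighborhood of the segment $[F_1,F_2]$ (they differ from $F_t$ by rank-one corrections multiplied by cut-off gradients), while $\Q W=+\infty$ on the entire closed half-space $\{\det F\leq 0\}$ by (F$_2$), so the test integral may equal $+\infty$ for every admissible $\varphi_n$. Observing that $\det$ is affine and positive along the segment says nothing about the corrector; making this step rigorous requires constructions adapted to the determinant constraint, which is the actual content of Fonseca's paper. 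As written, neither (F$_1$) nor the blow-up half of (F$_2$) is established.
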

The assertion (F$_2$) roughly says  that the ``strong-Determinant Constraint" is not $p$-ample, i.e., $\Z_\infty W$ cannot be of $p$-polynomial growth, and so neither Theorem A nor Theorem B is consistent with (s-DC). From the assertion (F$_1$) we see that $\Q W\leq\R W$ which shows that $\R W$ cannot be of $p$-polynomial growth when combined with (F$_2$). Hence, the theorem of Ben Belgacem is not compatible with (s-DC).

\begin{Question}
Develop strategies for passing from $p$-ample to non-$p$-ample case.
\end{Question}

%%%%%

\section{3d-2d passage with determinant type constraints}

\subsection{Statement of the problem}\ 

Let $W:\MM^{3\times 3}\to[0,+\infty]$ be Borel measurable and $p$-coercive (with $p>1$) and, for each $\eps>0$, let $I_\eps:W^{1,p}(\Sigma_\eps;\RR^3)\to[0,+\infty]$ be defined by
$$
I_\eps(\phi):={1\over\eps}\int_{\Sigma_\eps}W(\nabla\phi(x,x_3))dxdx_3,
$$
where $\Sigma_\eps:=\Sigma\times]-{\eps\over 2},{\eps\over 2}[\subset\RR^3$ with $\Sigma\subset\RR^2$ Lipschitz, open and bounded, and a point of $\Sigma_\eps$ is denoted by $(x,x_3)$ with $x\in\Sigma$ and $x_3\in]-{\eps\over 2},{\eps\over 2}[$. The problem of 3d-2d passage is the following.
\begin{itemize}
\item[($\mathcal{P}_2$)] {\em Prove (or disprove) that
$$
\forall\psi\in W^{1,p}(\Sigma;\RR^3)\ \ \Gamma(\pi)\hbox{-}\lim_{\eps\to 0}I_\eps(\phi)=\int_\Sigma W_{\rm mem}(\nabla\psi(x))dx
$$ 
and find a representation formula for $W_{\rm mem}:\MM^{3\times 2}\to[0,+\infty]$.}
\end{itemize}
At the begining of the nineties, Le Dret and Raoult answered to ($\mathcal{P}_2$) in the case where $W$ is ``finite and without singularities" (see \S 2.3). Recently, we extended the Le Dret-Raoult theorem to the case where $W$ is compatible with (w-DC) and (s-DC) as Theorem C and Theorem D (see \S 2.4 and \S 2.5).

\subsection{The $\Gamma(\pi)$-convergence}\ 

The concept of $\Gamma(\pi)$-convergence was introduced Anzellotti, Baldo and Percivale in order to deal with dimension reduction problems in mechanics. Let $\pi=\{\pi_\eps\}_\eps$ be the family of $L^p$-continuous maps $\pi_\eps:W^{1,p}(\Sigma_\eps;\RR^3)\to W^{1,p}(\Sigma;\RR^3)$ defined by
$$
\displaystyle\pi_\eps(\phi):={1\over\eps}\int_{-{\eps\over 2}}^{\eps\over 2}\phi(\cdot,x_3)dx_3.
$$
\newtheorem*{Def}{\bf Definition}
\begin{Def}[Anzellotti-Baldo-Percivale \cite{anzellotti-baldo-percivale94} 1994]\ 

We say that $\{I_\eps\}_{\eps}$ $\Gamma(\pi)$-converge to $I_{\rm mem}$ as $\eps$ goes to zero, and we write 
$$
I_{\rm mem}=\Gamma(\pi)\hbox{-}\lim\limits_{\eps\to 0} I_\eps,
$$ 
if and only if 
$$
\forall\psi\in W^{1,p}(\Sigma;\RR^3)\quad\left(\Gamma(\pi)\hbox{-}\liminf\limits_{\eps\to 0} I_\eps\right)(\psi)=\left(\Gamma(\pi)\hbox{-}\limsup\limits_{\eps\to 0} I_\eps\right)(\psi)=I_{\rm mem}(\psi)
$$
with $\Gamma(\pi)\hbox{-}\liminf\limits_{\eps\to 0} I_\eps, \Gamma(\pi)\hbox{-}\limsup\limits_{\eps\to 0} I_\eps:W^{1,p}(\Sigma;\RR^3)\to[0,+\infty]$ respectively given by{\rm:}
\begin{itemize}
\item[\SMALL$\blacklozenge$] $\Gamma(\pi)\hbox{-}\liminf\limits_{\eps\to 0} I_\eps(\psi):=\inf\left\{\liminf\limits_{\eps\to 0}I_\eps(\phi_\eps):\pi_\eps(\phi_\eps)\stackrel{L^p}{\to}\psi\right\};$
\item[\SMALL$\blacklozenge$] $\Gamma(\pi)\hbox{-}\limsup\limits_{\eps\to 0} I_\eps(\psi):=\inf\left\{\limsup\limits_{\eps\to 0}I_\eps(\phi_\eps):\pi_\eps(\phi_\eps)\stackrel{L^p}{\to}\psi\right\}$.
\end{itemize}
\end{Def}
Anzellotti, Baldo and Percivale proved that their concept of $\Gamma(\pi)$-convergence is not far from that of $\Gamma$-convergence introduced by De Giorgi and Franzoni.  For each $\eps>0$, consider $\mathcal{I}_\eps:W^{1,p}(\Sigma;\RR^3)\to[0,+\infty]$ defined by
$$
\mathcal{I}_\eps(\psi):=\inf\Big\{I_\eps(\phi):\pi_\eps(\phi)=\psi\Big\}.
$$
\begin{Def}[De Giorgi-Franzoni \cite{degiorgi-franzoni75,degiorgi75} 1975]\ 

We say that $\{\mathcal{I}_\eps\}_{\eps}$ $\Gamma$-converge to $I_{\rm mem}$ as $\eps$ goes to zero, and we write 
$$
I_{\rm mem}=\Gamma\hbox{-}\lim\limits_{\eps\to 0} \mathcal{I}_\eps,
$$
if and only if
$$
\forall\psi\in W^{1,p}(\Sigma;\RR^3)\quad\left(\Gamma\hbox{-}\liminf\limits_{\eps\to 0} \mathcal{I}_\eps\right)(\psi)=\left(\Gamma\hbox{-}\limsup\limits_{\eps\to 0} \mathcal{I}_\eps\right)(\psi)=I_{\rm mem}(\psi)
$$
with $\Gamma\hbox{-}\liminf\limits_{\eps\to 0} \mathcal{I}_\eps, \Gamma\hbox{-}\limsup\limits_{\eps\to 0} \mathcal{I}_\eps:W^{1,p}(\Sigma;\RR^3)\to[0,+\infty]$ respectively given by{\rm:}
\begin{itemize}
\item[\SMALL$\blacklozenge$] $\Gamma\hbox{-}\liminf\limits_{\eps\to 0} \mathcal{I}_\eps(\psi):=\inf\left\{\liminf\limits_{\eps\to 0}\mathcal{I}_\eps(\psi_\eps):\psi_\eps\stackrel{L^p}{\to}\psi\right\};$
\item[\SMALL$\blacklozenge$] $\Gamma\hbox{-}\limsup\limits_{\eps\to 0} \mathcal{I}_\eps(\psi):=\inf\left\{\limsup\limits_{\eps\to 0}\mathcal{I}_\eps(\psi_\eps):\psi_\eps\stackrel{L^p}{\to}\psi\right\}.$
\end{itemize}
\end{Def}
The link between $\Gamma(\pi)$-convergence and $\Gamma$-convergence is given by the following lemma. 
\begin{Lemma}[see \cite{anzellotti-baldo-percivale94}]\ 

$
I_{\rm mem}=\Gamma(\pi)\hbox{-}\lim\limits_{\eps\to 0} I_\eps$ if and only if $I_{\rm mem}=\Gamma\hbox{-}\lim\limits_{\eps\to 0} \mathcal{I}_\eps
$.
\end{Lemma}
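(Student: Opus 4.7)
The plan is to prove the equivalence by establishing separately the two identities
$$\Gamma(\pi)\hbox{-}\liminf\limits_{\eps\to 0} I_\eps=\Gamma\hbox{-}\liminf\limits_{\eps\to 0}\mathcal{I}_\eps\quad\hbox{and}\quad\Gamma(\pi)\hbox{-}\limsup\limits_{\eps\to 0} I_\eps=\Gamma\hbox{-}\limsup\limits_{\eps\to 0}\mathcal{I}_\eps$$
of functionals on $W^{1,p}(\Sigma;\RR^3)$. Since the four objects are defined by structurally identical infima and differ only through the use of $\liminf$ versus $\limsup$, I would write the argument in full only for the liminf identity; the limsup case follows verbatim by replacing every $\liminf$ by $\limsup$.

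For the inequality $\Gamma\hbox{-}\liminf\mathcal{I}_\eps(\psi)\leq\Gamma(\pi)\hbox{-}\liminf I_\eps(\psi)$, I would take an arbitrary sequence $\phi_\eps\in W^{1,p}(\Sigma_\eps;\RR^3)$ with $\pi_\eps(\phi_\eps)\stackrel{L^p}{\to}\psi$ and set $\psi_\eps:=\pi_\eps(\phi_\eps)$. By construction $\pi_\eps(\phi_\eps)=\psi_\eps$, so $\phi_\eps$ is admissible in the infimum defining $\mathcal{I}_\eps(\psi_\eps)$, whence $\mathcal{I}_\eps(\psi_\eps)\leq I_\eps(\phi_\eps)$. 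Since $\psi_\eps\to\psi$ in $L^p$, the sequence $\{\psi_\eps\}$ is admissible for the $\Gamma$-liminf of $\mathcal{I}_\eps$ at $\psi$, so $\Gamma\hbox{-}\liminf\mathcal{I}_\eps(\psi)\leq\liminf\mathcal{I}_\eps(\psi_\eps)\leq\liminf I_\eps(\phi_\eps)$, and passing to the infimum over $\phi_\eps$ yields the desired bound.

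For the reverse direction I would fix $\psi_\eps\stackrel{L^p}{\to}\psi$, assume $\liminf\mathcal{I}_\eps(\psi_\eps)<+\infty$ (otherwise there is nothing to prove), and for each $\eps$ at which $\mathcal{I}_\eps(\psi_\eps)<+\infty$ use the definition of $\mathcal{I}_\eps$ as an infimum to select $\phi_\eps\in W^{1,p}(\Sigma_\eps;\RR^3)$ with $\pi_\eps(\phi_\eps)=\psi_\eps$ and $I_\eps(\phi_\eps)\leq\mathcal{I}_\eps(\psi_\eps)+\eta_\eps$ for some sequence $\eta_\eps\to 0$. Then $\pi_\eps(\phi_\eps)=\psi_\eps\stackrel{L^p}{\to}\psi$, so $\{\phi_\eps\}$ is admissible for $\Gamma(\pi)\hbox{-}\liminf I_\eps(\psi)$, and $\liminf I_\eps(\phi_\eps)\leq\liminf(\mathcal{I}_\eps(\psi_\eps)+\eta_\eps)=\liminf\mathcal{I}_\eps(\psi_\eps)$; taking the infimum over $\psi_\eps$ finishes the argument.

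The whole proof rests on the definitional symmetry between the ``3d viewpoint'' of varying $\phi_\eps\in W^{1,p}(\Sigma_\eps;\RR^3)$ and the ``2d viewpoint'' of varying its fibrewise average $\psi_\eps=\pi_\eps(\phi_\eps)\in W^{1,p}(\Sigma;\RR^3)$. I do not expect a serious obstacle: no regularity, growth or coercivity of $W$ beyond what is already assumed enters anywhere, and the $+\infty$-values are handled automatically by the convention that the infimum over an empty admissible set is $+\infty$. The only delicate point, which is in fact the substantive content of the lemma, is that the infimum defining $\mathcal{I}_\eps$ is not attained in general, a difficulty bypassed by the $\eta_\eps$-quasi-minimiser constructed in the second step.
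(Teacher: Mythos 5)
Your argument is correct and in fact proves the stronger statement that the lower and upper $\Gamma(\pi)$-limits of $I_\eps$ coincide pointwise with the lower and upper $\Gamma$-limits of $\mathcal{I}_\eps$, which is precisely the content of the result the paper cites from Anzellotti--Baldo--Percivale without reproducing its (standard, direct) proof. The only cosmetic omission is that for the values of $\eps$ at which $\mathcal{I}_\eps(\psi_\eps)=+\infty$ you should still exhibit some admissible $\phi_\eps$ with $\pi_\eps(\phi_\eps)=\psi_\eps$ (e.g.\ the trivial extension $\phi_\eps(x,x_3):=\psi_\eps(x)$) so that the family $\{\phi_\eps\}$ is defined for every $\eps$; the inequality $I_\eps(\phi_\eps)\leq\mathcal{I}_\eps(\psi_\eps)+\eta_\eps$ then holds trivially there and the conclusion is unaffected.
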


%%%%%%%

\subsection{$\Gamma(\pi)$-convergence of $I_\eps$: finite case}\ 

Let $W_0:\MM^{3\times 2}\to[0,+\infty]$ be defined by
$$
W_0(\xi):=\inf\limits_{\zeta\in\RR^3}W(\xi\mid\zeta).
$$
\begin{Theorem}[Le Dret-Raoult \cite{ledret-raoult93,ledret-raoult95} 1993]\ \hskip150mm

If $W$ is continuous and 
$\exists c>0\ \forall F\in\MM^{3\times 3}\ W(F)\leq c(1+|F|^p)$
then
$$
\forall\psi\in W^{1,p}(\Sigma;\RR^3)\ \ \Gamma(\pi)\hbox{-}\lim\limits_{\eps\to 0}I_\eps(\psi)=\int_\Sigma\Q W_0(\nabla\psi(x))dx.
$$
\end{Theorem}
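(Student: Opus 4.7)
The plan is to prove the $\Gamma(\pi)$-$\liminf$ and $\Gamma(\pi)$-$\limsup$ inequalities separately, after rescaling the thin cylinder to a fixed domain. Via $y_3:=x_3/\eps$ I associate to each $\phi$ the rescaled function $\tilde\phi(x,y_3):=\phi(x,\eps y_3)$ on $\Sigma\times Y'$, with $Y':=\,]{-1/2},1/2[$, so that
$$
I_\eps(\phi)=\int_{\Sigma\times Y'}W\bigl(\nabla_x\tilde\phi\mid\tfrac{1}{\eps}\partial_{y_3}\tilde\phi\bigr)\,dx\,dy_3.
$$
A preliminary observation: continuity of $W$ together with $p$-coercivity and $p$-polynomial growth force $W_0$ to be continuous (the minimising $\zeta$ in $W_0(\xi)=\inf_\zeta W(\xi\mid\zeta)$ lies in a compact set $|\zeta|^p\lesssim 1+|\xi|^p$ on which $W$ is uniformly continuous) and of $p$-polynomial growth. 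Hence Dacorogna's theorem applies to $W_0$: $\Q W_0$ is continuous, quasiconvex, of $p$-polynomial growth, and $\int_\Sigma\Q W_0(\nabla\cdot)\,dx$ is simultaneously weakly $W^{1,p}$-lower semicontinuous and the $L^p$-relaxation of $\int_\Sigma W_0(\nabla\cdot)\,dx$ on $W^{1,p}(\Sigma;\RR^3)$.

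\emph{Liminf.} Take $\phi_\eps$ with $\pi_\eps(\phi_\eps)\stackrel{L^p}{\to}\psi$ and $\liminf_\eps I_\eps(\phi_\eps)<+\infty$. The $p$-coercivity of $W$ yields uniform bounds on $\nabla_x\tilde\phi_\eps$ and on $\eps^{-1}\partial_{y_3}\tilde\phi_\eps$ in $L^p(\Sigma\times Y';\RR^3)$; in particular $\partial_{y_3}\tilde\phi_\eps\to 0$ strongly in $L^p$. Poincar\'e in $y_3$ applied to $\tilde\phi_\eps-\pi_\eps(\phi_\eps)$ gives $\tilde\phi_\eps\to\tilde\psi$ in $L^p$, with $\tilde\psi(x,y_3):=\psi(x)$, and hence $\tilde\phi_\eps\wto\tilde\psi$ weakly in $W^{1,p}(\Sigma\times Y';\RR^3)$. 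From the pointwise chain $W\geq W_0\geq\Q W_0$,
$$
I_\eps(\phi_\eps)\geq\int_{\Sigma\times Y'}\Q W_0(\nabla_x\tilde\phi_\eps)\,dx\,dy_3.
$$
Define $\tilde f:\MM^{3\times 3}\to[0,+\infty)$ by $\tilde f(F):=\Q W_0(F')$ with $F'\in\MM^{3\times 2}$ the first two columns of $F$. A short check shows $\tilde f$ is quasiconvex: if $\varphi\in W^{1,\infty}_0(]0,1[^3;\RR^3)$ then $\varphi(\cdot,y_3)\in W^{1,\infty}_0(]0,1[^2;\RR^3)$ for every $y_3$, so the 2D-quasiconvexity of $\Q W_0$ integrates slice by slice. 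Continuity and $p$-polynomial growth of $\tilde f$ are inherited from $\Q W_0$, so the standard weak lower semicontinuity theorem for quasiconvex integrands yields
$$
\liminf_\eps I_\eps(\phi_\eps)\geq\int_{\Sigma\times Y'}\tilde f(\nabla\tilde\psi)\,dx\,dy_3=\int_\Sigma\Q W_0(\nabla\psi)\,dx.
$$

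\emph{Limsup.} For affine $\psi(x)=Fx+b$ and $\delta>0$, pick $\zeta^\delta\in\RR^3$ realising $W(F\mid\zeta^\delta)\leq W_0(F)+\delta$ and set $\phi_\eps(x,x_3):=\psi(x)+x_3\zeta^\delta$. Then $\pi_\eps(\phi_\eps)=\psi$ and $I_\eps(\phi_\eps)=|\Sigma|W(F\mid\zeta^\delta)\leq|\Sigma|(W_0(F)+\delta)$; diagonalising in $(\eps,\delta)$ gives $\Gamma(\pi)\hbox{-}\limsup I_\eps(\psi)\leq\int_\Sigma W_0(\nabla\psi)dx$. For piecewise-affine $\psi$ with pieces $T_i$ of gradient $F_i$, choose $\zeta_i^\delta$ adapted to each $F_i$, take a partition of unity $(\chi_i)$ subordinate to an $\eps$-neighbourhood of $(T_i)$, and set $\zeta^\delta_\eps(x):=\sum_i\chi_i(x)\zeta_i^\delta$, $\phi_\eps(x,x_3):=\psi(x)+x_3\zeta^\delta_\eps(x)$. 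The $x_3$-mean of $x_3\zeta^\delta_\eps(x)$ vanishes, so $\pi_\eps(\phi_\eps)=\psi$ identically; outside the transition strips the gradient equals $(F_i\mid\zeta_i^\delta)$, while inside them $|x_3\nabla\zeta^\delta_\eps|=O(\eps)\cdot O(1/\eps)=O(1)$. Since the strips have volume $O(\eps^2)$ in $\Sigma_\eps$, their contribution to $I_\eps$ is $O(\eps)\to 0$ by the $p$-polynomial growth of $W$, giving again the bound $\int_\Sigma W_0(\nabla\psi)dx$. Finally, for general $\psi\in W^{1,p}(\Sigma;\RR^3)$, Dacorogna's relaxation theorem (applied to $W_0$) produces piecewise-affine $\psi_n\to\psi$ in $W^{1,p}$ with $\int_\Sigma W_0(\nabla\psi_n)dx\to\int_\Sigma\Q W_0(\nabla\psi)dx$, and the $L^p$-lower semicontinuity of $\Gamma(\pi)\hbox{-}\limsup I_\eps$ together with a diagonal extraction concludes.

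The main obstacle is the gluing in the piecewise-affine limsup construction: the transverse correctors $x_3\zeta_i^\delta$ jump by $O(\eps)$ across adjacent triangles and must be smoothed while keeping $\pi_\eps(\phi_\eps)=\psi$ exactly. The partition-of-unity construction at the in-plane scale $\eps$ is the only scaling that works, balancing a strip gradient of size $O(1)$ against a strip volume of size $O(\eps^2)$ in $\Sigma_\eps$; a wider strip would produce an extra energy of the order of its width, a thinner one would blow up the gradient. A conceptually more striking feature is that $\Q W_0$ appears both from below, as the quasiconvex envelope needed to lift the 2D integrand to a weakly lsc 3D functional on the rescaled domain, and from above, as the $L^p$-relaxation of $\int W_0(\nabla\cdot)dx$ needed to pass from piecewise-affine to general $\psi$, which is what pins down the limit density even though neither $W$ nor $W_0$ is assumed quasiconvex.
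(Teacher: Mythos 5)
The paper states this Le Dret--Raoult theorem as a cited result and gives no proof of it, so there is nothing in-paper to compare against; your proposal reconstructs the original Le Dret--Raoult argument (rescaling to $\Sigma\times\,]{-1/2},1/2[$, lower bound via the slicewise quasiconvexity of $F\mapsto\Q W_0(F')$ together with weak lower semicontinuity for quasiconvex integrands of $p$-growth, upper bound via the ansatz $\psi(x)+x_3\zeta(x)$ on piecewise affine maps followed by relaxation and a diagonal extraction), and the structure is correct. One statement in the last step needs repair: piecewise affine $\psi_n$ with $\psi_n\to\psi$ \emph{strongly} in $W^{1,p}$ and $\int_\Sigma W_0(\nabla\psi_n)\,dx\to\int_\Sigma\Q W_0(\nabla\psi)\,dx$ cannot exist in general, because strong $W^{1,p}$ convergence combined with the continuity and $p$-polynomial growth of $W_0$ forces $\int_\Sigma W_0(\nabla\psi_n)\,dx\to\int_\Sigma W_0(\nabla\psi)\,dx$ by Vitali's theorem. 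What relaxation actually provides is $\psi_n\to\psi$ in $L^p$ (equivalently, weakly in $W^{1,p}$ by coercivity), and the fact that the recovery sequence may be taken in $\Aff(\Sigma;\RR^3)$ is precisely the statement $\overline{\mathcal I}=\overline{\mathcal I}_{\rm aff}=\int_\Sigma\Q W_0(\nabla\cdot)\,dx$, which holds here since $W_0$ is continuous, $p$-coercive and of $p$-polynomial growth (this is the second bullet of Theorem B applied to $W_0$, for which $\Z W_0\leq W_0$ trivially has $p$-growth). With that reading, the $L^p$-lower semicontinuity of the $\Gamma(\pi)$-$\limsup$ closes the argument exactly as you describe, and the rest of the proof (in particular the $O(\eps)$ accounting of the transition strips and the exactness of $\pi_\eps(\phi_\eps)=\psi$) is sound.
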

Although the Le Dret-Raoult theorem is compatible neither with (w-DC) nor (s-DC) it established a suitable variational framework to deal with dimensional reduction problems : it is the point of departure of many works on the subject.

\subsection{$\Gamma(\pi)$-convergence of $I_\eps$: ``weak-Determinant Constraint"}\ 

By using the Le Dret-Raoult theorem we can prove the following result.

\newtheorem*{ThC}{\bf Theorem C}
\begin{ThC}[see \cite{oah-jpm06,oah-jpm09}]\ 

Assume that
\begin{itemize}
\item[(D)] $\exists\alpha,\beta>0\ \forall F\in\MM^{3\times 3}\ \big(|\det F|\geq\alpha\then W(F)\leq\beta(1+|F|^p)\big)$.
\end{itemize}
Then
$$
\forall\psi\in W^{1,p}(\Sigma;\RR^3)\quad\Gamma(\pi)\hbox{-}\displaystyle\lim\limits_{\eps\to 0}I_\eps(\psi)=\int_\Sigma\Q W_0(\nabla\psi(x))dx.
$$
\end{ThC}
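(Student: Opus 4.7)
The approach is to combine the Le Dret--Raoult theorem (only valid for continuous $p$-growth integrands) with Corollary 1 applied to the membrane integrand $W_0$. A key starting observation is that $(D)$ on $W$ implies $(P)$ on $W_0$: given $\xi=(\xi_1\mid\xi_2)\in\MM^{3\times 2}$ with $|\xi_1\land\xi_2|\geq\alpha$, the choice $\zeta:=\alpha(\xi_1\land\xi_2)/|\xi_1\land\xi_2|^2$ satisfies $|\det(\xi\mid\zeta)|=\alpha$ and $|\zeta|\leq 1$, so $(D)$ gives $W_0(\xi)\leq W(\xi\mid\zeta)\leq\beta'(1+|\xi|^p)$. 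Corollary 1 then supplies, for every $\psi\in W^{1,p}(\Sigma;\RR^3)$,
$$
\overline{I_0}(\psi)=\overline{I_0}_{\rm aff}(\psi)=\int_\Sigma \Q W_0(\nabla\psi(x))dx,\qquad I_0(\psi):=\int_\Sigma W_0(\nabla\psi(x))dx,
$$
together with the continuity and $p$-polynomial growth of $\Q W_0=\Z W_0$.

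For the $\Gamma(\pi)$-limsup inequality, I would use the $\overline{I_0}_{\rm aff}$-representation above to pick $\psi_n\in\Aff(\Sigma;\RR^3)$ with $\psi_n\stackrel{L^p}{\to}\psi$ and $\int_\Sigma W_0(\nabla\psi_n)dx\to\int_\Sigma\Q W_0(\nabla\psi)dx$. On each affine piece of $\psi_n$ with gradient $\xi$, select $\zeta_n(\xi)\in\RR^3$ with $W(\xi\mid\zeta_n(\xi))\leq W_0(\xi)+1/n$, and set
$$
\phi_{\eps,n}(x,x_3):=\psi_n(x)+x_3\,\zeta_n(\nabla\psi_n(x)).
$$
Then $\pi_\eps(\phi_{\eps,n})=\psi_n$, the gradient $\nabla\phi_{\eps,n}=(\nabla\psi_n\mid\zeta_n(\nabla\psi_n))$ is piecewise constant on $\Sigma_\eps$, and $I_\eps(\phi_{\eps,n})\leq\int_\Sigma W_0(\nabla\psi_n)dx+|\Sigma|/n$. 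A standard diagonal extraction over $(n,\eps)$ completes this half.

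For the $\Gamma(\pi)$-liminf inequality, I would approximate $W$ from below by a nondecreasing sequence of continuous integrands $W^{(k)}$ of $p$-polynomial growth, e.g. the inf-convolutions $W^{(k)}(F):=\inf_{G\in\MM^{3\times 3}}\{W(G)+k|F-G|^p\}$, with $W^{(k)}\nearrow W$. Setting $I_\eps^{(k)}(\phi):=\frac{1}{\eps}\int_{\Sigma_\eps}W^{(k)}(\nabla\phi)dxdx_3$ one has $I_\eps^{(k)}\leq I_\eps$, and the Le Dret--Raoult theorem applied to $W^{(k)}$ gives
$$
\Gamma(\pi)\hbox{-}\liminf_{\eps\to 0}I_\eps(\psi)\geq\Gamma(\pi)\hbox{-}\lim_{\eps\to 0}I_\eps^{(k)}(\psi)=\int_\Sigma\Q W_0^{(k)}(\nabla\psi(x))dx,
$$
where $W_0^{(k)}(\xi):=\inf_{\zeta\in\RR^3}W^{(k)}(\xi\mid\zeta)$. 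Passing $k\to+\infty$ via the monotone convergence $\Q W_0^{(k)}\nearrow\Q W_0$ closes the argument.

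The main obstacle is precisely this last upward passage $\Q W_0^{(k)}\nearrow\Q W_0$: monotonicity and pointwise convergence of the $W_0^{(k)}$ are straightforward, but transferring this information through the quasiconvex envelope requires exploiting the representation $\Q W_0=\Z W_0$ (Theorem A, available since $W_0$ is $p$-ample by $(P)$), Fonseca's continuity property, and a $k$-uniform $p$-growth bound independent of $k$. A secondary technical point in the upper bound is that the selector $\zeta_n(\xi)$ may blow up on pieces where $|\xi_1\land\xi_2|$ is nearly zero; since such pieces carry vanishing measure (otherwise $\int_\Sigma W_0(\nabla\psi_n)dx$ would diverge), they can be discarded without affecting the diagonal limit.
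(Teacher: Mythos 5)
Your strategy is genuinely different from the paper's: the paper first relaxes the three-dimensional functional, using that (D) makes $W$ $p$-ample so that Theorem A-B gives $\overline{I}_\eps(\phi)={1\over\eps}\int_{\Sigma_\eps}\Q W(\nabla\phi)\,dxdx_3$ with $\Q W=\Z_\infty W$ continuous and of $p$-polynomial growth, then applies the Le Dret--Raoult theorem to this everywhere-finite integrand, and finally identifies $\Q[\Q W]_0=\Q W_0$. You instead attack the two $\Gamma(\pi)$-inequalities directly, and both halves have genuine gaps. For the upper bound, $\phi_{\eps,n}(x,x_3):=\psi_n(x)+x_3\,\zeta_n(\nabla\psi_n(x))$ is not an admissible competitor: since $\psi_n$ is piecewise affine, $x\mapsto\zeta_n(\nabla\psi_n(x))$ is piecewise constant and jumps across the interfaces between affine pieces, so for $x_3\neq 0$ the map $\phi_{\eps,n}$ is discontinuous there and does not belong to $W^{1,p}(\Sigma_\eps;\RR^3)$ (its distributional gradient has a singular part on the interfaces). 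The natural repair---mollifying $\zeta_n\circ\nabla\psi_n$---creates transition layers where the gradient is $(\nabla\psi_n+x_3\nabla\zeta^\rho\mid\zeta^\rho)$ with no control on the determinant; since (D) bounds $W$ only where $|\det F|\geq\alpha$ and $W$ may be $+\infty$ elsewhere, the energy of the corrected sequence cannot be estimated. This is precisely the obstruction the paper's detour through the finite continuous integrand $\Q W$ is designed to avoid.

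For the lower bound, the argument hinges on $\sup_k\Q W_0^{(k)}=\Q W_0$, which you correctly flag but do not prove. The increasing limit $\sup_k\Q W_0^{(k)}$ is quasiconvex and bounded above by $W_0$, hence by $\Q W_0$, but there is no general principle forcing equality: quasiconvexification is an infimum over test functions, and $\sup_k\inf_\varphi$ need not equal $\inf_\varphi\sup_k$; the sketched appeal to $\Q W_0=\Z W_0$, Fonseca's continuity and uniform $p$-growth does not by itself close this. A further problem is that the Moreau--Yosida regularizations $W^{(k)}(F)=\inf_G\{W(G)+k|F-G|^p\}$ increase to the lower semicontinuous envelope of $W$, not to $W$; Theorem C assumes only Borel measurability, so even the pointwise convergence $W^{(k)}\nearrow W$ fails in general, and it is then unclear that the limit of $\int_\Sigma\Q W_0^{(k)}(\nabla\psi)\,dx$ recovers $\int_\Sigma\Q W_0(\nabla\psi)\,dx$ rather than a possibly smaller quantity. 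Your preliminary observation that (D) implies (P) for $W_0$ via $\zeta=\alpha(\xi_1\land\xi_2)/|\xi_1\land\xi_2|^2$ is correct and consistent with the paper, but as written neither $\Gamma(\pi)$-inequality is established.
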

\begin{proof}[\small\bf\em Outline of the proof] 

\small$\blacktriangleright$ As the $\Gamma(\pi)$-limit is stable by substituting $I_\eps$ by its relaxed functional $\overline{I}_\eps$, i.e., $\overline{I}_\eps:W^{1,p}(\Sigma_\eps;\RR^3)\to[0,+\infty]$ given by
$$
I_\eps(\phi):=\inf\left\{\liminf\limits_{n\to+\infty}I_\eps(\phi_n):\phi_n\stackrel{L^p}{\to}\phi\right\}
={1\over\eps}\inf\left\{\liminf\limits_{n\to+\infty}\int_{\Sigma_\eps}W(\nabla\phi_n)dxdx_3:\phi_n\stackrel{L^p}{\to}\phi\right\},
$$
it suffices to prove that
$$
\forall\psi\in W^{1,p}(\Sigma;\RR^3)\quad\Gamma(\pi)\hbox{-}\displaystyle\lim\limits_{\eps\to 0}\overline{I}_\eps(\psi)=\int_\Sigma\Q W_0(\nabla\psi(x))dx. 
$$

\small$\blacktriangleright$ As $W$ satisfies (D) it is $p$-ample (see \S 1.6), and so by Theorem A-B we have 
$$
\forall\eps>0\ \forall \phi\in W^{1,p}(\Sigma_\eps;\RR^3)\quad\overline{I}_\eps(\phi)={1\over\eps}\int_{\Sigma_\eps}\Q W(\nabla\phi(x,x_3))dxdx_3
$$
with $\Q W=\Z_\infty W$ (which is of $p$-polynomial growth and so continuous by the property (2) of Fonseca).

\smallskip

\small$\blacktriangleright$ Applying the Le Dret-Raoult theorem we deduce that
$$
\forall \psi\in W^{1,p}(\Sigma;\RR^3)\quad\Gamma(\pi)\hbox{-}\lim_{\eps\to 0}\overline{I}_\eps(\psi)=\int_{\Sigma}\Q[\Q W]_0(\nabla\psi(x))dx
$$
with $[\Q W]_0:\MM^{3\times 2}\to[0,+\infty]$ given by
$$
[\Q W]_0(\xi):=\inf_{\zeta\in\RR^3}\Q W(\xi\mid\zeta).
$$

\small$\blacktriangleright$ Finally, we prove that $\Q[\Q W]_0=\Q W_0$, and the proof is complete.
\end{proof}

Theorem C highlights the fact that the concept of $p$-amplitude has a ``nice" behavior with respect to the $\Gamma(\pi)$-convergence. More generally, let $\{\pi_\eps\}_\eps$ be a family of $L^p$-continuous maps $\pi_\eps$ from $W^{1,p}(\Sigma_\eps;\RR^m)$ to $W^{1,p}(\Sigma;\RR^m)$, where $\Sigma_\eps\subset\RR^N$ (resp. $\Sigma\subset\RR^k$ with $k\in\NN^*$) is a bounded open set,  let $\{W_\eps\}_{\eps}$ be an uniformly $p$-coercive family of measurable integrands $W_\eps:\MM^{m\times N}\to[0,+\infty]$ and, for each $\eps>0$, let $I_\eps, \Q I_\eps:W^{1,p}(\Sigma_\eps;\RR^m)\to[0,+\infty]$ be respectively defined by
\begin{itemize}
\item[\SMALL$\blacklozenge$] $\displaystyle I_\eps(\phi):=\int_{\Sigma_\eps}W_\eps(\nabla\phi(x))dx;$
\item[\SMALL$\blacklozenge$] $\displaystyle \Q I_\eps(\phi):=\int_{\Sigma_\eps}\Q W_\eps(\nabla\phi(x))dx$.
\end{itemize}
The following theorem says that the $\Gamma(\pi)$-limit is stable by substituting $I_\eps$ by $\Q I_\eps$ whenever every $W_\eps$ is $p$-ample.
\begin{Theorem}[see \cite{oah-jpm09}]\ 

Assume that{\rm:}
\begin{itemize}
\item[\SMALL$\blacklozenge$] $\forall\eps>0$ $W_\eps$ is $p$-ample{\rm;}
\item[\SMALL$\blacklozenge$] $\exists I_0:W^{1,p}(\Sigma;\RR^m)\to[0,+\infty]$ $\Gamma(\pi)\hbox{-}\lim\limits_{\eps\to 0}\Q I_\eps=I_0$.
\end{itemize}
Then $\Gamma(\pi)\hbox{-}\lim\limits_{\eps\to 0} I_\eps=I_0$.
\end{Theorem}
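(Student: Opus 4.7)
The plan is to reduce the assertion to the hypothesis on $\Q I_\eps$ via Theorem A-B, which for each fixed $\eps$ identifies the relaxation $\overline{I}_\eps$ of $I_\eps$ with $\Q I_\eps$ whenever $W_\eps$ is $p$-ample. The point then becomes to check that the $\Gamma(\pi)$-limit is insensitive to replacing $I_\eps$ by $\overline{I}_\eps=\Q I_\eps$, whence the conclusion follows from the second hypothesis.

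The lower bound $\Gamma(\pi)\hbox{-}\liminf_{\eps\to 0} I_\eps\geq I_0$ is immediate: the pointwise inequality $\Q W_\eps\leq W_\eps$ yields $\Q I_\eps\leq I_\eps$, and taking the $\Gamma(\pi)\hbox{-}\liminf$ together with the second hypothesis delivers the inequality.

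For the upper bound I would argue by diagonal extraction. Fix $\psi$ with $I_0(\psi)<+\infty$ and, using $\Gamma(\pi)\hbox{-}\lim\Q I_\eps=I_0$, pick a recovery sequence $\{\phi_\eps\}_\eps$ with $\phi_\eps\in W^{1,p}(\Sigma_\eps;\RR^m)$, $\pi_\eps(\phi_\eps)\stackrel{L^p}{\to}\psi$, and $\limsup_{\eps\to 0}\Q I_\eps(\phi_\eps)\leq I_0(\psi)$. At each fixed $\eps>0$, Theorem A-B gives $\overline{I}_\eps(\phi_\eps)=\Q I_\eps(\phi_\eps)$, so one can find $\{\phi_{\eps,n}\}_n\subset W^{1,p}(\Sigma_\eps;\RR^m)$ with $\phi_{\eps,n}\stackrel{L^p}{\to}\phi_\eps$ as $n\to+\infty$ and $\liminf_{n\to+\infty}I_\eps(\phi_{\eps,n})\leq\Q I_\eps(\phi_\eps)+\eps$. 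Using the $L^p$-continuity of $\pi_\eps$ at this fixed $\eps$, I would then choose $n(\eps)$ large enough that $I_\eps(\phi_{\eps,n(\eps)})\leq\Q I_\eps(\phi_\eps)+2\eps$ and $\|\pi_\eps(\phi_{\eps,n(\eps)})-\pi_\eps(\phi_\eps)\|_{L^p(\Sigma)}\leq\eps$, and set $\tilde\phi_\eps:=\phi_{\eps,n(\eps)}$. A triangle inequality then gives $\pi_\eps(\tilde\phi_\eps)\stackrel{L^p}{\to}\psi$ and $\limsup_{\eps\to 0}I_\eps(\tilde\phi_\eps)\leq I_0(\psi)$, which is the required upper bound.

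The only delicate step is the diagonal extraction, because $\pi_\eps$ depends on $\eps$ and (typically, as for the averaging $\pi_\eps$ of 3d-2d reduction) its $L^p$-Lipschitz constant blows up as $\eps\to 0$. This is handled by selecting $n(\eps)$ \emph{after} $\eps$ has been fixed, so that the $\eps$-dependent constant is absorbed before the outer limit is taken. I do not anticipate any further obstruction beyond standard care in this diagonal argument.
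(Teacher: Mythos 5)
Your argument is correct and follows essentially the same route as the paper: identify $\overline{I}_\eps=\Q I_\eps$ via Theorem A-B, then show the $\Gamma(\pi)$-upper and lower limits are unchanged when $I_\eps$ is replaced by $\overline{I}_\eps$. Your diagonal extraction (choosing $n(\eps)$ after fixing $\eps$ so that the $L^p$-continuity of $\pi_\eps$ is only used at fixed $\eps$) is precisely the detail behind the step the paper dismisses as ``easy to see''.
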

\begin{proof}
\small As every $W_\eps$ is $p$-ample, from Theorem A-B we deduce that $\overline{I}_\eps=\Q I_\eps$ for all $\eps>0$. On the other hand, as every $\pi_\eps$ is $L^p$-continuous, it is easy to see that  $\Gamma(\pi)$-$\liminf_{\eps\to0} I_\eps=\Gamma(\pi)$-$\liminf_{\eps\to0} \overline{I}_\eps$ and $\Gamma(\pi)$-$\limsup_{\eps\to0} I_\eps=\Gamma(\pi)$-$\limsup_{\eps\to0} \overline{I}_\eps$, and the theorem follows.
\end{proof}

\subsection{$\Gamma(\pi)$-convergence of $I_\eps$: ``strong-Determinant Constraint"}\ 

The following theorem gives an answer to ($\mathcal{P}_2$) in the framework of nonlinear elasticity (it is consistent with (s-DC)) in the same spirit as the theorem of Ball in 1977 (see \cite{ball77}). It is the result of several works on the subject: mainly, the attempt of Percivale in 1991 (see \cite{percivale91}), the rigorous answer to ($\mathcal{P}_2$) by Le Dret and Raoult in the $p$-polynomial growth case (see \cite{ledret-raoult93,ledret-raoult95}) and especially the substantial contributions of Ben Belgacem (see \cite{benbelgacem96,benbelgacem97,benbelgacem00}).

\newtheorem*{ThD}{\bf Theorem D}
\begin{ThD}[see \cite{oah-jpm08b,oah-jpm09}]\ 

Assume that{\rm:}
\begin{itemize}
\item[(D$_0$)] $W$ is continuous\hbox{\rm ;} 
\item[(D$_1$)] $W(F)=+\infty\iff\det F\leq 0;$
\item[(D$_2$)] $\forall\delta>0\ \exists c_\delta>0\ \forall F\in\MM^{3\times 3}\big(\det F\geq\delta\then W(F)\leq c_\delta(1+|F|^p)\big)$.
\end{itemize}
Then
$$
\forall\psi\in W^{1,p}(\Sigma;\RR^3)\quad\Gamma(\pi)\hbox{-}\displaystyle\lim\limits_{\eps\to 0}I_\eps(\psi)=\int_\Sigma\Q W_0(\nabla\psi(x))dx.
$$
\end{ThD}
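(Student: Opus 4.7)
The plan is to bootstrap Theorem D from Theorem C by a monotone approximation of $W$, leveraging the fact that although $W$ itself is not $p$-ample under (s-DC), the reduced integrand $W_0$ automatically satisfies condition (P) of \S 1.5. Indeed, if $|\xi_1\land\xi_2|\geq\alpha$ one may take $\zeta:=(\xi_1\land\xi_2)/|\xi_1\land\xi_2|^2$, so that $\det(\xi\mid\zeta)=1$ and hence by (D$_2$), $W_0(\xi)\leq W(\xi\mid\zeta)\leq c_1(1+|\xi|^p+\alpha^{-p})\leq\beta(1+|\xi|^p)$. Consequently $W_0$ is $p$-ample and, by Theorem A, $\Q W_0=\Z_\infty W_0$ is of $p$-polynomial growth and continuous.

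For the lower bound, introduce the truncation $W_n:=W\wedge n(1+|\cdot|^p)$ and the associated $I^n_\eps(\phi):=\frac{1}{\eps}\int_{\Sigma_\eps}W_n(\nabla\phi)\,dx\,dx_3$: each $W_n$ is continuous, $p$-coercive, and trivially satisfies (D) (with $\beta=n$), while $W_n\nearrow W$ pointwise. Applying Theorem C to $W_n$ yields
$$
\Gamma(\pi)\hbox{-}\lim_{\eps\to 0}I^n_\eps(\psi)=\int_\Sigma\Q (W_n)_0(\nabla\psi(x))\,dx.
$$
Since $W_n\leq W$ implies $I^n_\eps\leq I_\eps$, we obtain $\int_\Sigma\Q(W_n)_0(\nabla\psi)\,dx\leq\Gamma(\pi)\hbox{-}\liminf_{\eps\to 0}I_\eps(\psi)$ for every $n$. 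Passing $n\to\infty$ requires $\Q(W_n)_0\nearrow\Q W_0$ pointwise: one first checks $(W_n)_0\nearrow W_0$ (using the $p$-growth of $W_0$ just obtained to confine a near-minimizer $\zeta$ to a bounded set, where $W_n=W$ for $n$ large), and then transfers the monotone convergence to the envelopes through the representation $\Q W_0=\Z_\infty W_0$ and monotone convergence inside the $\Z_\infty$ infimum.

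For the upper bound, apply Corollary 1 to $W_0$: for every $\psi\in W^{1,p}(\Sigma;\RR^3)$ with $\int_\Sigma\Q W_0(\nabla\psi)\,dx<+\infty$, there exists $\{\psi_k\}\subset\Aff(\Sigma;\RR^3)$ with $\psi_k\to\psi$ in $L^p$ and $\int_\Sigma W_0(\nabla\psi_k)\,dx\to\int_\Sigma\Q W_0(\nabla\psi)\,dx$. On each affine piece of $\psi_k$ pick a constant $\zeta_k$ satisfying $W(\nabla\psi_k\mid\zeta_k)\leq W_0(\nabla\psi_k)+1/k$, and form the Le Dret--Raoult ansatz $\phi_{k,\eps}(x,x_3):=\psi_k(x)+\eps x_3\,\zeta_k(x)$. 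One checks $\pi_\eps(\phi_{k,\eps})=\psi_k$ and $\nabla\phi_{k,\eps}=(\nabla\psi_k+\eps x_3\nabla\zeta_k\mid\zeta_k)$, so that by dominated convergence
$$
\lim_{\eps\to 0}I_\eps(\phi_{k,\eps})=\int_\Sigma W(\nabla\psi_k(x)\mid\zeta_k(x))\,dx\leq\int_\Sigma W_0(\nabla\psi_k)\,dx+|\Sigma|/k.
$$
A standard diagonalization $\eps\mapsto k(\eps)$ then produces the required recovery sequence.

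The main obstacle is the upper bound, specifically the handling of the cross-section field $\zeta_k$: under (s-DC), near-minimizers of $\zeta\mapsto W(\xi\mid\zeta)$ at $\xi=\nabla\psi_k(x)$ blow up as $|\xi_1\land\xi_2|\to 0$ and the infimum need not be attained. One must therefore first restrict attention to piecewise affine $\psi_k$ whose gradient stays in a compact subset of the open set $\{\xi:\xi_1\land\xi_2\neq 0\}$ on which $\Q W_0$ is finite, and control the energy contribution of a thin boundary layer around the degenerate stratum before diagonalizing. A secondary difficulty is justifying $\Q(W_n)_0\nearrow\Q W_0$ in the lower bound, since quasiconvex envelopes need not commute with monotone limits in general; it is the $p$-amplitude of $W_0$ and the $\Z_\infty$-representation from Theorem A that make this step go through.
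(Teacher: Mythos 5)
Your skeleton (lower bound by monotone truncation, upper bound by the Le Dret--Raoult ansatz over the piecewise affine recovery sequences of Corollary~1) correctly locates the difficulties, and your derivation of (P) for $W_0$ from (D$_1$)--(D$_2$) matches the paper's first step; but both of your main steps contain genuine gaps. For the lower bound, the interchange $\sup_n\Q(W_n)_0=\Q W_0$ does not follow from ``monotone convergence inside the $\Z_\infty$ infimum'': writing $\Z_\infty (W_n)_0(F)=\inf_\varphi\int_Y(W_n)_0(F+\nabla\varphi)\,dy$, monotone convergence only gives $\inf_\varphi\sup_n=\Z_\infty W_0(F)$, whereas you need $\sup_n\inf_\varphi\geq\inf_\varphi\sup_n$, a sup--inf interchange in the hard direction; quasiconvex envelopes do not commute with increasing limits in general, and closing this would require a separate compactness/equi-integrability argument on near-minimizing $\varphi_n$. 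The paper avoids the issue entirely: its lower bound is the direct lemma $\overline{\mathcal{I}}\leq\Gamma(\pi)\hbox{-}\liminf_{\eps\to 0}I_\eps$ combined with the identification $\overline{\mathcal{I}}(\psi)=\int_\Sigma\Q W_0(\nabla\psi)\,dx$ from Corollary~1, so your truncation detour is both unnecessary and unproved.

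The upper bound is the real obstruction, and you name it without resolving it. Concretely: (i) with $\zeta_k$ piecewise constant, $\phi_{k,\eps}=\psi_k+\eps x_3\zeta_k$ is discontinuous across the interfaces of the affine pieces, hence not admissible in $W^{1,p}(\Sigma_\eps;\RR^3)$; any regularization of $\zeta_k$ must keep $\det(\nabla\psi_k\mid\zeta_k)>0$ throughout the transition layers, where otherwise $W=+\infty$, and under (s-DC) an infinite integrand on a thin layer is not a small error --- your ``boundary layer'' remark supplies no mechanism for this. (ii) Finiteness of $\int_\Sigma W_0(\nabla\psi_k)\,dx$ forces $\partial_1\psi_k\land\partial_2\psi_k\neq 0$ on each piece but does not make $\psi_k$ locally injective, and it is local injectivity (coherent orientation across interfaces) that permits a continuous normal-type field $\zeta$ with a uniform determinant bound. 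The paper's proof of exactly this step is the chain $\Gamma(\pi)\hbox{-}\limsup_{\eps\to 0}I_\eps\leq\overline{\mathcal{I}}_{\rm diff_*}\leq\overline{\mathcal{I}}_{{\rm aff}_{\rm li}}\leq\overline{\mathcal{R I}}_{{\rm aff}_{\rm li}}\leq\overline{\mathcal{R I}}\leq\overline{\mathcal{I}}$, which rests on the Ben Belgacem--Bennequin approximation of locally injective piecewise affine maps by $C^1$-immersions with uniformly non-degenerate cross product, the Kohn--Strang representation of $\R W_0$, Ben Belgacem's continuity and $p$-polynomial growth of $\R W_0$ under (P$_0$)--(P$_1$), and the Gromov--{\`E}lia{\v{s}}berg density of $\Aff_{\rm li}(\Sigma;\RR^3)$ in $W^{1,p}(\Sigma;\RR^3)$. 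None of these ingredients is replaced by your sketch, so the recovery-sequence construction is not established.
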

{{\small\bf\em Outline of the proof.} \small$\blacktriangleright$ It is easy to see that if $W$ satisfies (D$_0$), (D$_1$) and (D$_2$) then:
\begin{itemize}
\item[(P$_0$)] $W_0$ is continuous;
\item[(P$_1$)] $\forall\alpha>0\ \exists \beta_\alpha>0\ \forall \xi\in\MM^{3\times 2}\big(|\xi_1\land\xi_2|\geq\alpha\then W_0(\xi)\leq \beta_\alpha(1+|\xi|^p)\big)$.
\end{itemize}
In particular, $W_0$ satisfies (P) since clearly (P$_1$) implies (P).

\smallskip

\small$\blacktriangleright$ Let $\mathcal{I}, \overline{\mathcal{I}}, \overline{\mathcal{I}}_{\rm diff_*}:W^{1,p}(\Sigma;\RR^3)\to[0,+\infty]$ be respectively defined by:
\begin{itemize}
\item[\SMALL$\blacklozenge$] $\displaystyle\mathcal{I}(\psi):=\int_\Sigma W_0(\nabla\psi(x))dx$;
\item[\SMALL$\blacklozenge$] $\overline{\mathcal{I}}(\psi):=\inf\left\{\liminf\limits_{n\to+\infty}\mathcal{I}(\psi_n):\psi_n\stackrel{L^p}{\to}\psi\right\}$;
\item[\SMALL$\blacklozenge$] $\overline{\mathcal{I}}_{\rm diff_*}(\psi):=\inf\left\{\liminf\limits_{n\to+\infty}\mathcal{I}(\psi_n):C^1_*(\overline{\Sigma};\RR^3)\ni\psi_n\stackrel{L^p}{\to}\psi\right\}$,
\end{itemize}
where $C^1_*(\overline{\Sigma};\RR^3)$ is the set of $C^1$-immersions from $\overline{\Sigma}$ to $\RR^3$, i.e., $$C^1_*(\overline{\Sigma};\RR^3):=\Big\{\psi\in C^1(\overline{\Sigma};\RR^3):\forall x\in\overline{\Sigma}\ \partial_1\psi(x)\land\partial_2\psi(x)\not= 0\Big\}.$$ As $W_0$ satisfies (P), by Corollary 1 we have
$$
\forall\psi\in W^{1,p}(\Sigma;\RR^3)\quad\overline{\mathcal{I}}(\psi)=\int_\Sigma\Q W_0(\nabla\psi(x))dx.
$$
On the other hand, we can prove the following two lemmas.
\begin{small-lemma}
$\overline{\mathcal{I}}\leq\Gamma(\pi)\hbox{-}\liminf\limits_{\eps\to 0}I_\eps$.
\end{small-lemma}

\begin{small-lemma}
If {\rm(D$_0$)}, {\rm(D$_1$)} and {\rm(D$_2$)} hold then $\Gamma(\pi)\hbox{-}\limsup\limits_{\eps\to 0}I_\eps\leq \overline{\mathcal{I}}_{\rm diff_*}$.
\end{small-lemma}
Hence, it suffices to prove that $\overline{\mathcal{I}}_{\rm diff_*}\leq \overline{\mathcal{I}}$.

\smallskip

\small$\blacktriangleright$ Let $\overline{\mathcal{I}}_{\rm aff_{\rm li}},\R\mathcal{I},\overline{\mathcal{R I}},\overline{\mathcal{R I}}_{\rm aff_{\rm li}}:W^{1,p}(\Sigma;\RR^3)\to[0,+\infty]$ be respectively defined by:
\begin{itemize}
\item[\SMALL$\blacklozenge$] $\displaystyle \overline{\mathcal{I}}_{\rm aff_{\rm li}}(\psi):=\inf\left\{\liminf_{n\to+\infty}\mathcal{I}(\psi_n):\Aff_{\rm li}(\Sigma;\RR^3)\ni\psi_n\stackrel{L^p}{\to}\psi\right\}$;
\item[\SMALL$\blacklozenge$] $\displaystyle \R\mathcal{I}(\psi):=\int_\Sigma\R W_0(\nabla\psi(x))dx$;
\item[\SMALL$\blacklozenge$] $\displaystyle \overline{\mathcal{R I}}(\psi):=\inf\left\{\liminf_{n\to+\infty}\mathcal{R}\mathcal{I}(\psi_n):\psi_n\stackrel{L^p}{\to}\psi\right\}$;
\item[\SMALL$\blacklozenge$] $\displaystyle \overline{\mathcal{R I}}_{\rm aff_{\rm li}}(\psi):=\inf\left\{\liminf_{n\to+\infty}\mathcal{R}\mathcal{I}(\psi_n):\Aff_{\rm li}(\Sigma;\RR^3)\ni\psi_n\stackrel{L^p}{\to}\psi\right\}$
\end{itemize}
with $\Aff_{\rm li}(\Sigma;\RR^3):=\big\{\psi\in\Aff(\Sigma;\RR^3):\psi\hbox{ is locally injective}\big\}$. As $\overline{\mathcal{R I}}\leq\overline{\mathcal{I}}$, a way for proving $\overline{\mathcal{I}}_{\rm diff_*}\leq \overline{\mathcal{I}}$ is to establish the following  three inequalities:
\begin{itemize}
\item[\SMALL$\blacklozenge$] $\overline{\mathcal{I}}_{\rm diff_*}\leq \overline{\mathcal{I}}_{\rm aff_{\rm li}}$;
\item[\SMALL$\blacklozenge$] $\overline{\mathcal{I}}_{\rm aff_{\rm li}}\leq \overline{\mathcal{R I}}_{\rm aff_{\rm li}}$;
\item[\SMALL$\blacklozenge$] $\overline{\mathcal{R I}}_{\rm aff_{\rm li}}\leq \overline{\mathcal{R I}}$.
\end{itemize}
The first inequality follows by using the fact that $W_0$ satisfies (P$_0$) and (P$_1$) together with the following lemma.
\begin{Lemma}[Ben Belgacem-Bennequin \cite{benbelgacem96} 1996, see also \cite{oah-jpm09}]\ 

For all $\psi\in\Aff_{\rm li}(\Sigma;\RR^3)$ there exists $\{\psi_n\}_{n\geq 1}\subset C^1_*(\overline{\Sigma};\RR^3)$ such that{\rm:}
\begin{itemize}
\item[\SMALL$\blacklozenge$] $\psi_n\stackrel{W^{1,p}}{\to}\psi;$
\item[\SMALL$\blacklozenge$] $\exists\delta>0\ \forall x\in\overline{\Sigma}\ \forall n\geq 1\ |\partial_1\psi_n(x)\land\partial_2\psi_n(x)|\geq\delta$.
\end{itemize}
\end{Lemma}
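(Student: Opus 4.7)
The strategy is to localize using a triangulation on which $\psi$ is affine, and then to smooth the creases of $\psi$ in shrinking tubular neighborhoods of the $1$-skeleton, exploiting the extra dimension of the $\RR^3$ target to keep the normal vector $\partial_1\psi_n\land\partial_2\psi_n$ away from zero throughout the smoothing.

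First I would fix a finite triangulation $\{T_k\}_{k=1}^K$ of $\overline{\Sigma}$ on which $\psi$ is affine, say $\psi|_{T_k}(x)=A_k x + b_k$ with $A_k\in\MM^{3\times 2}$. Local injectivity forces each $A_k$ to have rank $2$ (otherwise a neighborhood of any interior point of $T_k$ would be sent onto a segment, contradicting injectivity there), so there is a uniform lower bound $\delta_0:=\min_k|A_{k,1}\land A_{k,2}|>0$. Outside any tubular neighborhood of the $1$-skeleton, $\psi$ is already $C^1$ with cross product at least $\delta_0$, so only interior edges and vertices need to be modified.

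The core step is the smoothing near an interior edge $e=T_1\cap T_2$. Set $n_j:=A_{j,1}\land A_{j,2}\neq 0$. Local injectivity on $T_1\cup T_2$ means that the two half-planes $\psi(T_1)$ and $\psi(T_2)$ meet along $\psi(e)$ as a proper dihedral (not folded back on itself), which provides a continuous unit direction $\nu$ in the plane spanned by $n_1,n_2$ bisecting the dihedral angle. Inside the tube $\{{\rm dist}(\cdot,e)<1/n\}$ I would build $\psi_n$ by a bending of the form $\psi_n(x)=\psi(x)+\tfrac{1}{n}\eta\!\left(n\,d_e(x)\right)\nu$, where $\eta$ is a smooth bump and $d_e$ is a signed distance to $e$. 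A direct computation shows that for a suitable choice of $\eta$ and $\nu$ the modified map is $C^1$ across $e$ and satisfies $|\partial_1\psi_n\land\partial_2\psi_n|\geq\delta_0/2$ for $n$ large. A similar but more delicate construction inside a disk of radius $1/n$ about each vertex handles the case where several triangles meet; here local injectivity guarantees that the incident triangles form a combinatorially compatible, non-overlapping configuration in $\RR^3$, leaving enough room to perform the bending radially while keeping the normal vector uniformly bounded below.

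With these modifications $\psi_n\in C^1_*(\overline{\Sigma};\RR^3)$ with $|\partial_1\psi_n\land\partial_2\psi_n|\geq\delta:=\delta_0/2$. Since all modifications live in a set $N_n$ of Lebesgue measure $O(1/n)\to 0$, and $\nabla\psi_n$ remains uniformly bounded in $L^\infty$ by a constant depending only on the triangulation, one obtains $\|\nabla\psi_n-\nabla\psi\|_{L^p}^p\leq C|N_n|\to 0$, hence $\psi_n\to\psi$ in $W^{1,p}$. I expect the geometric bending step near edges and (especially) vertices to be the main obstacle: a naive convex interpolation of the matrices $A_j$ need not remain rank $2$ and can drive the cross product through zero, and it is precisely the dihedral structure enforced by local injectivity that opens enough room in $\RR^3$ to keep the normal bounded away from zero throughout the smoothing. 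Once this geometric point is settled edge by edge and vertex by vertex, the remaining estimates reduce to routine cutoff and measure-theoretic arguments.
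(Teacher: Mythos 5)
The paper does not prove this lemma: it is imported from Ben Belgacem--Bennequin \cite{benbelgacem96} (see also \cite{oah-jpm09}), so your attempt has to stand on its own. Your architecture is the right one --- fix a triangulation on which $\psi$ is affine, observe that local injectivity forces each $A_k$ to have rank $2$ and hence gives $\delta_0=\min_k|A_{k,1}\land A_{k,2}|>0$, repair the $1$-skeleton in neighbourhoods of measure $O(1/n)$, and deduce $W^{1,p}$-convergence from a uniform $L^\infty$ bound on the gradients. But the two geometric steps that carry the entire content of the lemma are not actually performed. Near an edge, the map $\psi_n=\psi+\tfrac1n\eta(n\,d_e)\nu$ is \emph{not} $C^1$: its gradient is $\nabla\psi+\eta'(n\,d_e)\,\nu\otimes\nabla d_e$, which still contains the rank-one jump $A_2-A_1=a\otimes w$ of $\nabla\psi$ across $e$. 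What is needed is a mollification of the crease, whose gradient runs along the segment $A_1+t\,a\otimes w$, $t\in[0,1]$; the cross product of the columns then traces the straight segment from $n_1$ to $n_2$ in $\RR^3$, and the point to prove is that local injectivity across $e$ excludes $n_2=-\lambda n_1$ with $\lambda>0$ (a fold), so that this segment misses the origin and the plain mollification already has cross product bounded below --- no bending is needed at edges, but this ``no fold'' fact is exactly what you assert without proof, and the resulting bound is some edge-dependent positive constant, not $\delta_0/2$ (it degenerates as the dihedral angle closes; finiteness of the triangulation is what rescues uniformity).

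The vertex step, which you dispatch in one sentence, is the real theorem. There, mollification genuinely fails: the smoothed gradient near a vertex is a convex combination of three or more rank-two matrices $A_j$ whose normals $n_j$ can average to something arbitrarily close to zero even though no two adjacent triangles form a fold. One must therefore replace the cone of $\psi$ over the piecewise-linear link curve around the vertex by a $C^1$ immersion with normal bounded below, matching the already-modified edge collars on the boundary of the vertex disk, and the claim that injectivity of that cone ``leaves enough room in $\RR^3$'' to do this is precisely the statement to be proved --- it does not follow from the triangles being ``combinatorially compatible and non-overlapping''. As it stands, your text is a correct and well-motivated plan in which the decisive edge lemma is asserted and the decisive vertex construction is missing.
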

The second inequality is obtained by exploiting the Kohn-Strang representation of $\R W_0$ (see \cite{benbelgacem96}, see also \cite{oah-jpm09}). Finally, we establish  the next inequality by combining the following two lemmas.
\begin{Lemma}[Ben Belgacem \cite{benbelgacem96} 1996, see also \cite{oah-jpm09}]\ 

If $W_0$ satisfies {\rm (P$_0$)} and {\rm(P$_1$)} then{\rm:}
\begin{itemize}
\item[\SMALL$\blacklozenge$] $\mathcal{R} W_0$ is continuous{\rm;}
\item[\SMALL$\blacklozenge$] $\exists c>0\ \forall \xi\in\MM^{3\times 2}\ \R W_0(\xi)\leq c(1+|\xi|^p)$.
\end{itemize}
\end{Lemma}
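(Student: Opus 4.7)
My plan is to first establish the $p$-polynomial growth bound on $\R W_0$ via an explicit two-step rank-one lamination, and then to obtain continuity essentially for free from finite-valuedness combined with rank-one convexity. Since Kohn--Strang give $\R_{i+1}W_0\leq\R_i W_0$ with $\R W_0=\inf_{i\geq 0}\R_i W_0$, it will be enough to produce, for every $\xi=(\xi_1\mid\xi_2)\in\MM^{3\times 2}$, a double laminate whose four endpoints $\xi^{\sigma\sigma'}$ (with $\sigma,\sigma'\in\{+,-\}$) all lie in the ``good'' region $\{|\cdot_1\land\cdot_2|\geq\alpha\}$ of (P$_1$) and satisfy $|\xi^{\sigma\sigma'}|\leq C_\alpha(1+|\xi|)$; the growth bound then drops out of (P$_1$) applied at each endpoint.

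For the construction, fix $\alpha$ as in (P$_1$). I would first laminate along the rank-one direction $b\otimes e_1$, where $b\in\RR^3$ is chosen orthogonal to $\xi_1$ with $|b|=\alpha$ (any direction if $\xi_1=0$), writing $\xi=\tfrac{1}{2}\eta^++\tfrac{1}{2}\eta^-$ with $\eta^\sigma:=(\xi_1+\sigma b\mid\xi_2)$, so that $|\eta^\sigma_1|^2=|\xi_1|^2+\alpha^2\geq\alpha^2$. Then, for each $\eta^\sigma$, I would laminate along $a^\sigma\otimes e_2$, taking $a^\sigma\in\RR^3$ orthogonal to both $\eta^\sigma_1$ and $\xi_2$ and scaled to $|a^\sigma|=2\alpha/|\eta^\sigma_1|\leq 2$, so that $|\eta^\sigma_1\land a^\sigma|=2\alpha$. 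The Lagrange identity then yields $(\eta^\sigma_1\land\xi_2)\cdot(\eta^\sigma_1\land a^\sigma)=0$, and the four endpoints $\xi^{\sigma\sigma'}:=(\xi_1+\sigma b\mid\xi_2+\tfrac{\sigma'}{2}a^\sigma)$ satisfy
\[
\bigl|\xi^{\sigma\sigma'}_1\land\xi^{\sigma\sigma'}_2\bigr|^2=\bigl|\eta^\sigma_1\land\xi_2\bigr|^2+\alpha^2\geq\alpha^2,
\]
together with $|\xi^{\sigma\sigma'}|\leq C_\alpha(1+|\xi|)$.

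Applying (P$_1$) at each endpoint and collapsing the two laminations then gives
\[
\R W_0(\xi)\leq\R_2 W_0(\xi)\leq\tfrac{1}{4}\sum_{\sigma,\sigma'\in\{\pm\}}W_0(\xi^{\sigma\sigma'})\leq c(1+|\xi|^p)
\]
for a constant $c$ depending only on $\alpha$ and $\beta_\alpha$, which is the sought growth bound. In particular $\R W_0$ is finite on all of $\MM^{3\times 2}$; since $\R W_0$ is the rank-one convex envelope of a nonnegative function, it is rank-one convex and hence separately convex, and a real-valued separately convex function on a finite-dimensional space is locally Lipschitz (see, e.g., Dacorogna, \emph{Direct methods in the calculus of variations}), which yields continuity.

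The main obstacle will be the simultaneous choice of the auxiliary vectors $b$ and $a^\sigma$ so that all four endpoints of the double laminate land in $\{|\cdot_1\land\cdot_2|\geq\alpha\}$ with norms controlled uniformly in $\xi$; this is possible because $v\mapsto\eta^\sigma_1\land v$ acts as a scaled rotation on the $2$-dimensional subspace $(\eta^\sigma_1)^\perp$, reducing the orthogonality requirement on cross products to orthogonality in a $2$-plane, which is always achievable. Everything else is bookkeeping once the lamination directions are fixed.
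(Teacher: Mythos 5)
Your argument is correct. Note that the paper itself contains no proof of this lemma: it is quoted verbatim from Ben Belgacem's thesis \cite{benbelgacem96} and from \cite{oah-jpm09}, so the only possible comparison is with those sources, where the growth bound on $\R W_0$ is obtained by essentially the same device you use, namely an explicit two-step rank-one lamination that pushes an arbitrary $\xi\in\MM^{3\times 2}$ onto endpoints lying in $\{|\xi_1\land\xi_2|\geq\alpha\}$ with norms controlled by $|\xi|$, after which (P$_1$) applies. Your bookkeeping checks out: $b\perp\xi_1$ with $|b|=\alpha$ gives $|\eta^\sigma_1|\geq\alpha$; choosing $a^\sigma\perp\eta^\sigma_1$ and $a^\sigma\perp\xi_2$ with $|a^\sigma|=2\alpha/|\eta^\sigma_1|\leq 2$ makes $\eta^\sigma_1\land a^\sigma$ orthogonal to $\eta^\sigma_1\land\xi_2$ (by the Lagrange identity, both scalar products $\xi_2\cdot a^\sigma$ and $\eta^\sigma_1\cdot a^\sigma$ vanish) and of norm $2\alpha$, whence $|\xi^{\sigma\sigma'}_1\land\xi^{\sigma\sigma'}_2|^2=|\eta^\sigma_1\land\xi_2|^2+\alpha^2\geq\alpha^2$ and $|\xi^{\sigma\sigma'}|\leq|\xi|+\alpha+1$; the Kohn--Strang iteration with $t=\tfrac{1}{2}$ at both levels then yields $\R W_0(\xi)\leq\R_2W_0(\xi)\leq\tfrac{1}{4}\sum_{\sigma,\sigma'}W_0(\xi^{\sigma\sigma'})\leq c(1+|\xi|^p)$. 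The passage from finiteness plus rank-one convexity to separate convexity, local Lipschitz continuity and hence continuity is standard, and, as you observe, does not even use (P$_0$). The one step worth making explicit is the identification, for the extended-real-valued $W_0$, of the Kohn--Strang infimum $\inf_{i}\R_iW_0$ with the rank-one convex envelope (equivalently, the rank-one convexity of the finite function $\inf_i\R_iW_0$); this is exactly what the paper itself takes for granted in \S 1.4, and it follows from the monotonicity $\R_{i+1}W_0\leq\R_iW_0$ by the usual $\eps$-argument without any continuity assumption on $W_0$, so it is not a gap.
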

\begin{Lemma}[Gromov-{\`E}lia{\v{s}}berg \cite{gromov-eliashberg71} 1971, see also \cite{oah-jpm09}]\ 

$\Aff_{\rm li}(\Sigma;\RR^3)$ is strongly dense in $W^{1,p}(\Sigma;\RR^3)$. \hfill$\square$
\end{Lemma}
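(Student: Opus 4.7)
The plan is to combine the classical strong density of piecewise affine maps in $W^{1,p}$ with a finite-dimensional general-position argument at the vertices of an auxiliary triangulation. Since $\Aff(\Sigma;\RR^3)$ is already known to be strongly dense in $W^{1,p}(\Sigma;\RR^3)$, it suffices to approximate an arbitrary $\psi\in\Aff(\Sigma;\RR^3)$ in the $W^{1,p}$-norm by elements of $\Aff_{\rm li}(\Sigma;\RR^3)$. Fix a triangulation $\mathcal{T}$ of $\overline\Sigma$ on which $\psi$ is affine, and for $\delta>0$ and perturbations $\{v_j^*\}_{j\in V(\mathcal{T})}\subset\RR^3$ with $|v_j^*|\le\delta$, let $\tilde\psi$ denote the continuous piecewise affine map on $\mathcal{T}$ determined by $\tilde\psi(j):=\psi(j)+v_j^*$. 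A direct estimate yields $\|\tilde\psi-\psi\|_{W^{1,p}}\le C(\mathcal{T})\,\delta$, so the task reduces to exhibiting, for each $\delta>0$, an admissible $\{v_j^*\}$ for which $\tilde\psi$ is locally injective.

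Local injectivity of such a piecewise affine map decomposes into three transversality-type conditions: \textbf{(i)} on each triangle $T\in\mathcal{T}$, the gradient $\nabla\tilde\psi|_T\in\MM^{3\times 2}$ has rank $2$; \textbf{(ii)} across each interior edge shared by two triangles, the two images do not fold onto one another; \textbf{(iii)} at each interior vertex $v$, the images of the triangles in the star of $v$ form a pairwise non-overlapping fan around $\tilde\psi(v)$. Conditions (i) and (ii) fail only on a finite union of proper algebraic subvarieties of the perturbation space $(\RR^3)^{V(\mathcal{T})}$, whose complement is therefore open and dense; in particular, admissible perturbations of arbitrarily small norm satisfying (i)--(ii) exist.

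\textbf{Main obstacle.} The delicate point is (iii). The decisive structural fact is $\dim\RR^3=\dim\Sigma+1$: this single extra ambient direction is precisely what allows the cone of triangles meeting at each vertex to be ``splayed out'' into general position by arbitrarily small perturbations. A convenient implementation is to process vertices sequentially, at each step choosing $v_j^*$ in the complement of the finitely many proper algebraic hypersurfaces of $\RR^3$ that encode overlap configurations within the stars containing $j$; density of these complements allows the norm bound $|v_j^*|\le\delta$ to be respected, and the conditions already secured are preserved by openness. This finite-dimensional general-position argument is the piecewise affine incarnation of the Gromov-{\`E}lia{\v{s}}berg $h$-principle for immersions into targets of positive codimension, and the proof in the cited references either proceeds directly along these lines or reduces the statement to the $h$-principle via a smoothing step.
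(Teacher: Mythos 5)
You should first note that the paper does not prove this lemma at all: it is quoted from Gromov and {\`E}lia{\v{s}}berg with the details deferred to \cite{oah-jpm09}, so the only meaningful comparison is between your argument and the substance of that cited theorem. There your proposal has a genuine gap, and it sits exactly at the point you flag as delicate, condition (iii). The reduction to $\psi\in\Aff(\Sigma;\RR^3)$ and the genericity of (i) and (ii) are fine, but (iii) is \emph{not} a general-position condition. Local injectivity of $\tilde\psi$ at an interior vertex $v$ with link $w_1,\dots,w_k$ is equivalent to the embeddedness of the closed spherical polygon obtained by radially projecting the sectors ${\rm cone}\big(\tilde\psi(w_i)-\tilde\psi(v),\,\tilde\psi(w_{i+1})-\tilde\psi(v)\big)$ onto the unit sphere centred at $\tilde\psi(v)$. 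Two non-adjacent geodesic arcs of such a polygon can cross transversally, and a transversal crossing persists under every sufficiently small perturbation of the points $\tilde\psi(v),\tilde\psi(w_1),\dots,\tilde\psi(w_k)$. In other words, the set of vertex configurations for which two sectors of a star overlap along a common ray has nonempty interior in $(\RR^3)^{V(\mathcal{T})}$; it is not contained in a finite union of proper algebraic subvarieties. Consequently there exist $\psi\in\Aff(\Sigma;\RR^3)$ (any map whose link polygon at some vertex has a stable self-crossing) for which \emph{no} admissible family $\{v_j^*\}$ with $|v_j^*|\le\delta$ on the fixed triangulation $\mathcal{T}$ produces a locally injective map, and your sequential hypersurface-avoidance scheme cannot get started.

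This is not a technicality to be absorbed into ``general position'': it is precisely the content of the Gromov--{\`E}lia{\v{s}}berg theorem. The extra ambient dimension does help, but only through perturbations that are small in $W^{1,p}$ (and in $C^0$) without being Lipschitz-small. The standard mechanism is to refine the triangulation and replace $\psi$ on small disks $D_r(v)$ around the offending vertices by locally injective piecewise affine maps that agree with $\psi$ near $\partial D_r(v)$ and whose gradients remain bounded (or grow slowly enough) as $r\to 0$, so that the $W^{1,p}$-error is of order $r^{2/p}$ while the self-crossings of the link are resolved inside $D_r(v)$ by pushing sheets apart in the third direction. A fixed triangulation with $O(\delta)$ vertex perturbations is a Lipschitz-small deformation of $\psi$ and can never perform this surgery; that local modification with large-but-concentrated gradients is the missing idea, and it is why the statement is a theorem of $h$-principle type rather than a transversality exercise.
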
}

\begin{Question}
Try to simplify the proof of Theorem D as follows{\rm:} first, approximate $W$ satisfying {\rm(D$_0$)}, {\rm(D$_1$)} and {\rm(D$_2$)} or maybe weaker conditions compatible with {\rm(s-DC)} by a supremum of $p$-ample integrands $W_\delta$ satisfying {\rm(D)} with $\alpha,\beta>0$ which can depend on $\delta$, then, apply Theorem C to each $W_\delta$, and finally, pass to the limit as $\delta$ goes to zero.
\end{Question}

\bibliographystyle{acm}

\begin{thebibliography}{10}

\bibitem{oah-jpm06}
{\sc Anza~Hafsa, O., and Mandallena, J.-P.}
\newblock The nonlinear membrane energy: variational derivation under the
  constraint ``{$\det \nabla u\neq 0$}''.
\newblock {\em J. Math. Pures Appl. (9) 86}, 2 (2006), 100--115.

\bibitem{oah-jpm07}
{\sc Anza~Hafsa, O., and Mandallena, J.-P.}
\newblock Relaxation of variational problems in two-dimensional nonlinear
  elasticity.
\newblock {\em Ann. Mat. Pura Appl. (4) 186}, 1 (2007), 187--198.

\bibitem{oah-jpm08b}
{\sc Anza~Hafsa, O., and Mandallena, J.-P.}
\newblock The nonlinear membrane energy: variational derivation under the
  constraint ``{$\det\nabla u>0$}''.
\newblock {\em Bull. Sci. Math. 132}, 4 (2008), 272--291.

\bibitem{oah-jpm08a}
{\sc Anza~Hafsa, O., and Mandallena, J.-P.}
\newblock Relaxation theorems in nonlinear elasticity.
\newblock {\em Ann. Inst. H. Poincar\'e Anal. Non Lin\'eaire 25}, 1 (2008),
  135--148.

\bibitem{oah-jpm09}
{\sc Anza~Hafsa, O., and Mandallena, J.-P.}
\newblock Relaxation et passage 3d-2d avec contraintes de type d\'eterminant.
\newblock {\em Submitted\/} (2009). Preprint available on http://arxiv.org/abs/0901.3688.

\bibitem{anzellotti-baldo-percivale94}
{\sc Anzellotti, G., Baldo, S., and Percivale, D.}
\newblock Dimension reduction in variational problems, asymptotic development
  in {$\Gamma$}-convergence and thin structures in elasticity.
\newblock {\em Asymptotic Anal. 9}, 1 (1994), 61--100.

\bibitem{ball77}
{\sc Ball, J.~M.}
\newblock Convexity conditions and existence theorems in nonlinear elasticity.
\newblock {\em Arch. Rational Mech. Anal. 63}, 4 (1976/77), 337--403.

\bibitem{benbelgacem96}
{\sc Ben~Belgacem, H.}
\newblock {\em Mod\'elisation de structures minces en \'elasticit\'e non
  lin\'eaire}.
\newblock PhD thesis, {U}niversit{\'e} {P}ierre et {M}arie {C}urie, 1996.

\bibitem{benbelgacem97}
{\sc Ben~Belgacem, H.}
\newblock Une m\'ethode de {$\Gamma$}-convergence pour un mod\`ele de membrane
  non lin\'eaire.
\newblock {\em C. R. Acad. Sci. Paris S\'er. I Math. 324}, 7 (1997), 845--849.

\bibitem{benbelgacem00}
{\sc Ben~Belgacem, H.}
\newblock Relaxation of singular functionals defined on {S}obolev spaces.
\newblock {\em ESAIM Control Optim. Calc. Var. 5\/} (2000), 71--85
  (electronic).

\bibitem{celada-perrotta98}
{\sc Celada, P., and Perrotta, S.}
\newblock Functions with prescribed singular values of the gradient.
\newblock {\em NoDEA Nonlinear Differential Equations Appl. 5}, 3 (1998),
  383--396.

\bibitem{dacorogna82}
{\sc Dacorogna, B.}
\newblock Quasiconvexity and relaxation of nonconvex problems in the calculus
  of variations.
\newblock {\em J. Funct. Anal. 46}, 1 (1982), 102--118.

\bibitem{daco-rib04}
{\sc Dacorogna, B., and Ribeiro, A.~M.}
\newblock Existence of solutions for some implicit partial differential
  equations and applications to variational integrals involving quasi-affine
  functions.
\newblock {\em Proc. Roy. Soc. Edinburgh Sect. A 134}, 5 (2004), 907--921.

\bibitem{degiorgi75}
{\sc De~Giorgi, E.}
\newblock Sulla convergenza di alcune successioni d'integrali del tipo
  dell'area.
\newblock {\em Rend. Mat. (6) 8\/} (1975), 277--294.
\newblock Collection of articles dedicated to Mauro Picone on the occasion of
  his ninetieth birthday.

\bibitem{degiorgi-franzoni75}
{\sc De~Giorgi, E., and Franzoni, T.}
\newblock Su un tipo di convergenza variazionale.
\newblock {\em Atti Accad. Naz. Lincei Rend. Cl. Sci. Fis. Mat. Natur. (8) 58},
  6 (1975), 842--850.

\bibitem{fonseca88}
{\sc Fonseca, I.}
\newblock The lower quasiconvex envelope of the stored energy function for an
  elastic crystal.
\newblock {\em J. Math. Pures Appl. (9) 67}, 2 (1988), 175--195.

\bibitem{gromov86}
{\sc Gromov, M.}
\newblock {\em Partial differential relations}, vol.~9 of {\em Ergebnisse der
  Mathematik und ihrer Grenzgebiete (3) [Results in Mathematics and Related
  Areas (3)]}.
\newblock Springer-Verlag, Berlin, 1986.

\bibitem{gromov-eliashberg71}
{\sc Gromov, M.~L., and {\`E}lia{\v{s}}berg, J.~M.}
\newblock Construction of nonsingular isoperimetric films.
\newblock {\em Trudy Mat. Inst. Steklov. 116\/} (1971), 18--33, 235,
  (Translated in Proc. Steklov Inst. Math. 116 (1971) 13--28).

\bibitem{kohn-strang86}
{\sc Kohn, R.~V., and Strang, G.}
\newblock Optimal design and relaxation of variational problems. {II}.
\newblock {\em Comm. Pure Appl. Math. 39}, 2 (1986), 139--182.

\bibitem{ledret-raoult93}
{\sc Le~Dret, H., and Raoult, A.}
\newblock Le mod\`ele de membrane non lin\'eaire comme limite variationnelle de
  l'\'elasticit\'e non lin\'eaire tridimensionnelle.
\newblock {\em C. R. Acad. Sci. Paris S\'er. I Math. 317}, 2 (1993), 221--226.

\bibitem{ledret-raoult95}
{\sc Le~Dret, H., and Raoult, A.}
\newblock The nonlinear membrane model as variational limit of nonlinear
  three-dimensional elasticity.
\newblock {\em J. Math. Pures Appl. (9) 74}, 6 (1995), 549--578.

\bibitem{percivale91}
{\sc Percivale, D.}
\newblock The variational method for tensile structures.
\newblock {P}reprint~16, {D}ipartimento di {M}atematica {P}olitecnico di
  {T}orino, 1991.

\end{thebibliography}

\end{document}